\crefname{hypothesis}{Hypothesis}{Hypotheses}
\title{Splitting with Near-Circulant Linear Systems:\\
Applications to
Total Variation CT and PET\thanks{Submitted to the editors October 30, 2018.
\funding{EKR was supported in part by NSF grant DMS-1720237 and ONR grant
N000141712162. SK and JHW were supported by the National Research Foundation of Korea
(NRF) grant funded by the Korea government (MSIT) (No. NRF-2019R1A2C1007126).
}}}
\author{Ernest K. Ryu\thanks{Department of Mathematics, UCLA
  (\email{eryu@math.ucla.edu}).}
\and Seyoon Ko\thanks{Department of Statistics, Seoul National University
  (\email{syko0507@snu.ac.kr}, \email{wonj@stats.snu.ac.kr}).}
\and Joong-Ho Won\footnotemark[3]}
\newcommand{\reals}{\mathbb{R}}
\newcommand{\diag}{\mathrm{diag}}
\DeclareMathOperator*{\argmin}{arg\,min}
\newcommand{\prox}{{\mathbf{prox}}}
\begin{document}

\maketitle

\begin{abstract}
Many imaging problems, such as total variation reconstruction of X-ray computed tomography (CT) and positron-emission tomography (PET), are solved via a convex optimization problem with near-circulant, but not actually circulant, linear systems. The popular methods to solve these problems, alternating direction method of multipliers (ADMM) and primal-dual hybrid gradient (PDHG), do not directly utilize this structure. Consequently, ADMM requires a costly matrix inversion as a subroutine, and PDHG takes too many iterations to converge. In this paper, we present near-circulant splitting (NCS), a novel splitting method that leverages the near-circulant structure. We show that NCS can converge with an iteration count close to that of ADMM, while paying a computational cost per iteration close to that of PDHG. Through experiments on a CUDA GPU, we empirically validate the theory and demonstrate that NCS can effectively utilize the parallel computing capabilities of CUDA.
\end{abstract}

\begin{keywords}
Alternating direction method of multipliers, Douglas--Rachford splitting, Primal-dual hybrid gradient, Convergence analysis, Variational image denoising, Circulant linear systems\end{keywords}

\begin{AMS}
49M27, 49M29, 65K05, 65K10, 65Y20, 68U10, 90C25
\end{AMS}

\section{Introduction}
Many imaging problems are solved via the optimization problem
\begin{equation}
\begin{array}{ll}
\underset{\substack{x\in \reals^n
}}{
\mbox{minimize}}&g(Ax-b),
\end{array}
\label{eq:RCP}
\end{equation}
where $g$ is a convex function,
$A\in \reals^{m\times n}$,
$b\in \reals^m$,
and the variable $x\in \reals^n$ represents the (vectorized) image to reconstruct or restore.

One popular method to solve \eqref{eq:RCP} is alternating direction method of multipliers (ADMM) or equivalently Douglas--Rachford splitting (DRS)
\begin{align}
x^{k+1}&=x^k- \alpha^{-1}( A^TA)^{+}A^Tu^k
\tag{ADMM}
\label{eq:DRS/ADMM}
\\
u^{k+1}&=
\prox_{\alpha g^*}(u^{k}+\alpha (A(2x^{k+1}-x^k)- b))\nonumber
\end{align}
which converges for any $\alpha>0$
\cite{gabay1976,glowinski1975,peaceman1955,douglas1956,lions1979, kellogg1969,split_bregman2009,ChCaCrNoPo10,Ramani2012}.
 (This is not the usual form of ADMM. We performed a few changes of variables,
 as shown in Section~\ref{s:appendix}.)
Here,  $(A^TA)^{+}$ denotes the pseudoinverse of $A^TA$.\
When we can run ADMM,  empirical evidence supports that it converges with relatively few iterations.
However, we often cannot run ADMM, as $(A^TA)^+$ is too expensive to (directly) compute when, say,  $n\ge 256^2$.
If $A^TA$ is circulant, we can compute $(A^TA)^+$  efficiently with the fast Fourier transform (FFT),
but this is not the case in the applications we consider.
The cost of even a single iteration of ADMM is prohibitively expensive. 
While it is possible to solve the linear system iteratively with an inner loop, doing so raises the question of when to terminate the inner loop and under what condition can we theoretically ensure convergence of the approximate ADMM.



Another popular method to solve \eqref{eq:RCP} is primal-dual hybrid gradient (PDHG), also known as the Chambolle--Pock method,
\begin{align}
x^{k+1}&=x^k-\gamma^{-1} A^Tu^k
\tag{PDHG}
\label{eq:PDHG}
\\
u^{k+1}&=
\prox_{\alpha g^*}(u^k+\alpha (A(2x^{k+1}-x^k)- b))\nonumber
\end{align}
which converges for $\gamma /\alpha\ge \lambda_\mathrm{max}(A^TA)$ \cite{zhu08,pock2009,esser2010,ChCaCrNoPo10,ChaPoc11,sidky2012}.
The application of $(A^TA)^+$ is the only difference between ADMM and PDHG.
While PDHG has a low cost per iteration, the method often takes too many iterations, and empirically speaking the total cost, (cost per iteration) $\times$ (\# of iterations), is large.

In the imaging applications we consider,
the matrix $A^TA$ is not circulant, but near-circulant.
Because $A^TA$ is big and not circulant, we cannot use ADMM. 
On the other hand, PDHG may take prohibitively many iterations.
In this paper, we present near-circulant splitting (NCS), a splitting method that leverages the near-circulant structure of $A^TA$.
The method converges with an iteration count close to that of ADMM,
while paying a cost per iteration close to that of PDHG.

\subsection{Near-circulant matrices}
We say $A^TA$ is \emph{near-circulant} 
if there is a circulant matrix $C$ such that  $C\approx A^TA$.
Near-circulant matrices arise in many imaging applications.
Consider, for example, 
reconstruction of X-ray computed tomography (CT) with 2D parallel beam geometry under total variation penalty \cite{jain1989fundamentals,rudin1992nonlinear}:
\[
\begin{array}{ll}
\mbox{minimize}&
g(y,z)\\
\mbox{subject to}&
\begin{bmatrix}
R\\D
\end{bmatrix}
x-
\begin{bmatrix}
b\\0
\end{bmatrix}=
\begin{bmatrix}
y\\z
\end{bmatrix}
\end{array}
\]
where $g(y,z)=(1/2)\|y\|^2+\lambda \|z\|_1$,
$R$ is the discrete Radon transform, and $D$ is the finite difference operator.
Sections \ref{s:prelim} and \ref{s:appl} provide further details.
With
\[
A=\begin{bmatrix}
R\\ D
\end{bmatrix},
\qquad
A^TA=
R^TR+D^TD,
\]
we have an instance of \eqref{eq:RCP}.
If $R^TR$ and $D^TD$ are near-circulant, then so is $A^TA$.



Let $\mathcal{R}$ denote the (continuous) Radon transform.
Then $R$ is a discretization of $\mathcal{R}$.
Write $\mathcal{R}^*$ for the adjoint of $\mathcal{R}$, which represents backprojection.
Then $R^T$ is a discretization of $\mathcal{R}^*$
and $R^TR$ is a discretization of $\mathcal{R}^*\mathcal{R}$,
a linear spatially invariant operator \cite{deans1983radon}.
Since the Fourier transform operator $\mathcal{F}$ diagonalizes linear spatially invariant operators, 
\[
\mathcal{R}^*\mathcal{R}f =\mathcal{F}^{-1}(\hat{k}\hat{f})
\]
for some $\hat{k}:\mathbb{R}^2\to\mathbb{C}$, where $\hat{f}=\mathcal{F}(f)$.
We can discretize in the Fourier domain and obtain an alternative discretization $F^{-1}\diag (h) F$ of $\mathcal{R}^*\mathcal{R}$ for some $h\in \reals^n$, where $F$ denotes the discrete Fourier transform matrix.

We now have two discretizations of $\mathcal{R}^*\mathcal{R}$ that are not necessarily equal:
\[
F^{-1}\mathrm{diag}(h)F\ne R^TR.
\]
To put it differently,
discretizing $\mathcal{R}$ into $R$ and then forming $R^TR$
is not the same as forming $\mathcal{R}^*\mathcal{R}$ and then discretizing.
See Figure~\ref{fig:non-commute}.
However, the two are approximately equal,
since both approximate the same continuous linear operator $\mathcal{R}^*\mathcal{R}$.
Therefore $R^TR$ is near-circulant.

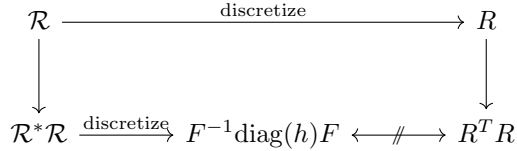
\begin{figure}[h]
\begin{center}
\begin{tikzcd}[sep=large]
\mathcal{R} \arrow{d}[left]{} 
\arrow{rr}{\mathrm{discretize}}
&
& R \arrow{d}{} \\
\mathcal{R}^*\mathcal{R} 
\arrow{r}{\mathrm{discretize}}
&  F^{-1}\diag(h) F
\arrow[r,leftrightarrow,"/\!\!/"{anchor=center,sloped}]
& R^TR
\end{tikzcd}
\end{center}
\caption{
Illustration of the two discretizations in consideration.}
\label{fig:non-commute}
\end{figure}

The matrix $D^TD$ is a discretization of the Laplacian operator $\nabla^2$ under the Neumann boundary condition and is not circulant.
Discretizing $\nabla^2$ under the periodic boundary condition gives us a circulant discretization $\tilde{L}$, and
$\tilde{L} \approx D^TD$.
See Section \ref{s:prelim} for details.

Write $\tilde{L}=F^{-1}\diag (d)F$
for some $d\in \reals^n$.
So we can use 
\[
F^{-1}(\diag(h+d))F\approx A^TA
\]
as a circulant approximation to $A^TA$.
We can efficiently compute the pseudoinverse of this circulant approximation using the FFT.
%


So $A^TA$ is near-circulant, but why is $A^TA$ not actually circulant?
The first of the two reasons is boundary conditions.
The discretized operator, which represents a physical imaging setup, works on a bounded domain.
Usually, periodic boundary conditions are not used since they would not represent the actual imaging setup. Without periodic boundary conditions, $A^TA$ fails to be circulant.
However, this reason is less interesting since zero-padding provides an easy computational fix for this issue \cite{Almeida2013}.
See \cite[Section~2.2]{cai_deblurring2016} for a detailed discussion on boundary conditions in imaging.

The more interesting reason is that the discretization does not preserve spatial invariance.
The applications of $\mathcal{R}^*\mathcal{R}$ or $R^TR$ entail a Cartesian to polar to Cartesian change of coordinates.
The continuous $\mathcal{R}^*\mathcal{R}$ is spatially invariant.
However, this change of coordinates breaks spatial invariance in the discretization $R^TR$.

\subsection{The main method}
We present an algorithm that leverages the near-circulant structure of $A^TA$
\begin{align}
x^{k+1}&=x^k- M^{+}A^Tu^k
\tag{NCS}
\label{eq:NCS}
\\
u^{k+1}&=
\prox_{\alpha g^*}(u^{k}+\alpha (A(2x^{k+1}-x^k)- b))\nonumber
\end{align}
which converges for $M\succeq \alpha A^TA$.
We call this method \emph{near-circulant splitting (NCS)}.
For the best performance, we choose
$M=\gamma I+\alpha C$, where $C$ is a circulant approximation to $ A^TA$ and $\gamma >0$ is small.

Since $M$ is circulant, we can compute $M^+$ efficiently via the FFT.
This makes the cost per iteration of NCS close to that of PDHG.
As we discuss in Section~\ref{s:theory},
the number of necessary iterations of NCS is close to that of ADMM when 
$M\approx \alpha A^TA$.

\emph{Remark.}
NCS (and ADMM) requires the computation of $M^+$, which by itself may be  numerically unstable.
However, the $u$-iterates depend on $M^+A^Tu^k$ through $AM^+A^Tu^k$,
so the stability of NCS requires the computation of $M^+$ to result in $AM^+A^T$ being stable, which holds in practice.
%
%


\subsection{Prior work and contributions}
ADMM and DRS were first presented in \cite{gabay1976,glowinski1975,peaceman1955,douglas1956,lions1979, kellogg1969},
and PDHG was first presented in \cite{zhu08,pock2009,esser2010,ChaPoc11}.
These methods have been successfully applied to medical imaging \cite{sidky2012,Ramani2012}.
There has been a large body of work analyzing the convergence of ADMM/DRS and PDHG. 
The analysis of Section~\ref{s:theory} utilizes proof techniques developed in such past work \cite{bot2015,davis2015,Chambolle2016,connor2017,davis2017}.

Circulant and Toeplitz structures arise in a wide range of contexts and therefore have been studied extensively throughout the numerical linear algebra literature \cite{strang1986,cg_toeplitz_1996,toeplitz_linear_algebra2003,toeplitz_linear_algebra2013,xie2017}.

One can interpret NCS as applying a preconditioner to PDHG.
The general idea of accelerating splitting methods with preconditioning is not new.
Pock and Chambolle studied the use of diagonal preconditioners applied to PDHG in \cite{pock2011}.
Bredies and Sun applied preconditioners inspired by PDEs to DRS,
with a focus on denoising and deblurring problems \cite{bredies2015,Bredies20152}.
These works present algorithms different from ours, have different convergence analyses, and do not utilize the near-circulant structure of the problem.
The modified linearized Bregman algorithm of Cai et al.\ \cite{cai_deblurring2016} applied to image deblurring
does utilize circulant preconditioners, but the algorithm and convergence analysis are different.

One can also interpret NCS as a partial ``linearization'' of ADMM. 
Deng and Yin briefly discussed this idea and suggested using it with the finite difference operator, but not an imaging operator, in a paragraph of Section 1.1 of \cite{Deng2016}.
They analyze the rate of linear convergence under stronger assumptions than ours, involving strong convexity and smoothness, but these assumptions are not met in the applications we consider as the total variation loss is neither strongly convex nor smooth.
In the general case, Deng and Yin establish convergence but do not analyze the rate.

The notion of near-circulant systems has been considered before.
Fessler and Booth in \cite{fessler1999} considered second order methods for differentiable optimization problems originating from medical imaging applications
and proposed approaches to overcome the fact that the Hessians are near-circulant and not circulant.
The setup in this work is different, since we do not assume differentiability, and
the viewpoint is different, since we utilize near-circulance as an advantageous property.

Ramani and Fessler \cite{Ramani2012} used ADMM to solve \eqref{eq:RCP}
and used preconditioned conjugate gradient (PCG) with circulant approximations as preconditioners for the subproblem of computing the inverse.
(They use the terminology ``nearly shift invariant'' systems.)
In other words, they perform PCG as an inner loop.
In comparison, NCS is conceptually and practically simpler as it requires no consideration of when to terminate the inner loop.
For the applications we, as well as Ramani and Fessler, consider, a single iteration of NCS has essentially the same computational cost as a single iteration of PCG, and a single outer loop of Ramani and Fessler's approach requires multiple PCG iterations.

Zhao et al.\ \cite{zhao_hyperspectral2013} used the discrete Fourier and cosine transforms to respectively solve the ADMM subproblems with periodic and Neumann boundary conditions for the hyperspectral imaging problem with sparse and total variation regularization.
However, this approach of
using fast transforms does not apply to the linear system of our setup given by the discrete Radon transform.

Zhao et al.\ \cite{zhao_hyperspectral2013} used the discrete Fourier and cosine transforms to respectively solve the ADMM subproblems with periodic and Neumann boundary conditions for the hyperspectral imaging problem with sparse
and total variation regularization.
Lee et al.\ \cite{lee2017} considered a similar approach to large-scale regression problems in statistics.
However, these approaches of
using fast transforms do not apply to the linear system of our setup given by the discrete Radon transform.

O'Connor and Vandenberghe \cite{connor2014,connor_thesis} presented splitting methods for imaging problems with linear systems of the form of circulant plus sparse, which are similar to, but not the same as, the near-circulant linear systems considered in this work.
They presented several methods that utilize this problem structure and empirically compared their performances.
Their algorithms are different from ours, and they do not provide a theoretical justification of any observed speedup.

The key theoretical contribution of this work is the analysis of the rate of convergence of NCS.
The theory, presented in Section~\ref{s:theory}, informs us on how to choose $M$.
Bredies and Sun in \cite{bredies2015,Bredies20152} establish a $\mathcal{O}(1/k)$ rate on the ``ergodic'' iterates, but their work does not directly inform us of how to choose the preconditioner.

Another contribution of this work is identifying applications with near-circulant structures that NCS can leverage.
Section~\ref{s:appl} discusses medical imaging applications and provides circulant approximations
to their linear systems.
Section~\ref{s:exp} experimentally demonstrates that NCS provides a significant speedup,
which is consistent with the theory of Section~\ref{s:theory}.
In these applications, the dominant cost of NCS is the application of $A$ and $A^T$, which is embarrassingly parallel.
Section~\ref{s:exp} also demonstrates that NCS can be computationally accelerated with GPUs.

\section{Definitions and preliminaries}\label{s:prelim}
In this section, we set up the notation and review standard results.
For details, we refer the readers to 
standard references on linear algebra \cite{horn2012,golub2012,laub2005},
Radon transform and tomographic imaging \cite{deans1983radon,toft1996radon},
and convex analysis \cite{rockafellar1970,rockafellar1974,boyd2004,ryu2016,BauschkeCombettes2017_convex}.

Given a symmetric matrix $A\in\reals^{n\times n}$,
we write $A\succeq 0$ to denote that $A$ is positive semidefinite.
If $A,B\in\reals^{n\times n}$ are symmetric, write
$A\succeq  B$ if $A-B\succeq 0$.
For $A\succeq 0$, define the seminorm
\[
\|x\|_A=(x^TAx)^{1/2}.
\]
Given $A\in \reals^{n\times n}$, write $A^+$ for the Moore--Penrose pseudoinverse.
When $A$ is invertible, $A^+=A^{-1}$.
Given $h\in \reals^n$, write $\diag(h)\in \mathbb{R}^{n\times n}$ for the diagonal matrix
such that $(\diag(h))_{ii}=h_i$ for $i=1,\dots,n$.
Write $F\in \reals^{n\times n}$ for the discrete Fourier transform (DFT) matrix.

A square matrix $C\in \reals^{n\times n}$ is circulant if and only if
$C=F^{-1}\diag(h)F$
for some $h\in \reals^n$, i.e., $C$ is circulant if and only if the DFT basis diagonalizes $C$.
This fact gives us the computationally efficient formula 
\[
C^+=F^{-1}\diag(h)^+F,
\]
where $\diag(h)^+$ is the diagonal matrix with
\[
(\diag(h))^+_{ii}=\left\{
\begin{array}{ll}
1/h_{i}&\text{ if }h_{i}\ne 0\\
0&\text{ otherwise}
\end{array}\right.
\]
for $i=1,\dots,n$.
Circulant matrices are the discrete analogue of spatially invariant linear operators.



The one-dimensional (1D) finite difference matrix is
\[
D_n=
\begin{bmatrix}
1&-1&&&&\\
&1&-1&&&\\
& &\ddots&\ddots&\\
&&&1&-1&\\
&&&&1&-1
\end{bmatrix}
\in \reals^{(n-1)\times n},
\]
and $L_n=D^T_nD_n$, where
\[
	L_{n} = \begin{bmatrix} 1 &  -1    &        &        &    \\ 
			               -1 &   2    &   -1   &        &    \\
						      &  & \ddots & &    \\
							  &        &   -1   &    2   & -1 \\
							  &        &        &   -1   &  1   
			\end{bmatrix}
			\in \mathbb{R}^{n\times n}
	.
\]
As a circulant approximation, we use $\tilde{L}_n\approx L_n$, where
\[
\tilde{L}_{n} = \begin{bmatrix} 2 &  -1    &        &        &  -1  \\ 
  			                   -1 &   2    &   -1   &        &    \\
				    		      & & \ddots & &    \\
				    			  &        &   -1   &    2   & -1 \\
				    		   -1 &        &        &   -1   &  2   
			    \end{bmatrix}			\in \mathbb{R}^{n\times n}
	.
\]
In 2D, we use the 2D finite difference matrix $D_{2,n}$ and 
\[
D^T_{2,n}D_{2,n} \approx I_{n}\otimes \tilde{L}_{n} + \tilde{L}_{n}\otimes I_{n}.
\]
The matrices $L_n$ and $\tilde{L}_n$ respectively arise in the discrete Poisson equation
with Neumann and periodic boundary conditions.
We can think of $\tilde{L}_n$ as a circulant approximation to the Toeplitz system $L_n$.
Approximating a Toeplitz system with a circulant system is a technique that has been explored in other applications \cite{strang1986,chan1988optimal,cg_toeplitz_1996,oppenheim2009discrete}.
Since $\tilde{L}_{n} $ is circulant, the DFT basis diagonalizes it, 
and we have
\begin{gather}
\tilde{L}_{n}=F^{-1}\diag(d)F,
\label{eq:f-diff}
\\
d_{i+1}=4\sin^2(i\pi/n)\qquad \text{for }i=0,\dots,n-1.\nonumber
\end{gather}
An analogous result holds in the 2D case.

The 2D Radon transform $\mathcal{R}f$ of a function $f:\mathbb{R}^2\to\mathbb{R}$ is
\begin{equation}\label{eq:radon-def}
	[\mathcal{R}f](s,\theta) = \int_{-\infty}^\infty\int_{-\infty}^\infty f(u,v)\delta(u\cos\theta+v\sin\theta -s )\;du dv,
	\quad
	s \in (-\infty,\infty), ~\theta\in[0,\pi)
	,
\end{equation}
where $\delta$ denotes the Dirac delta function.
The adjoint $\mathcal{R}^*$ of the Radon transform operator $\mathcal{R}$ is the backprojection operator:
\[
	[\mathcal{R}^*g](u,v) = \int_0^\pi g(u\cos\theta+v\sin\theta,\theta)d\theta
	.
\]
If $g=\mathcal{R}f$, it can be shown that 
\[
	[\mathcal{R}^*\mathcal{R} f](u,v)
	= k(u,v) * f(u,v)
	= \int_{-\infty}^\infty \int_{-\infty}^\infty
		k(\zeta,\xi) f(u-\zeta,v-\xi) d\zeta d\xi
\]
for $k(u,v)=(u^2+v^2)^{-1/2}$ \cite{deans1983radon}.
Thus $\mathcal{R}^*\mathcal{R}$ is a linear spatially invariant operator that evaluates the convolution with the radial kernel $k$. 
Applying the Fourier transform operator $\mathcal{F}$ on both sides and then taking the inverse, we obtain
\begin{equation}\label{eq:radon-formula}
	\mathcal{R}^*\mathcal{R}f = \mathcal{F}^{-1}(\hat{k}\hat{f}),
\end{equation}
where $\hat{k}=\mathcal{F}k$ and $\hat{f}=\mathcal{F}f$ are the Fourier transforms of $k$ and $f$, respectively.


A function $g:\reals^n\rightarrow\reals$ is $L$-Lipschitz continuous if 
\[
|g(x)-g(y)|\le L\|x-y\|
\]
for all $x,y\in \reals^n$.
A function $g$ is convex if
\[
g(\theta x+(1-\theta)y)
\le
\theta g(x)+(1-\theta)g(y)
\]
for all $x,y\in \reals^n$ and $\theta\in[0,1]$.
The conjugate of $g$ is
\[
g^*(u)=\sup_{x\in \reals^n}u^T x-g(x).
\]
The proximal operator with respect to $g$ is
\[
\prox_{\alpha g}(z)=\argmin_{x\in \reals^n}\left\{
\alpha g(x)+\frac{1}{2}\|x-z\|^2
\right\}
\]
where $\alpha > 0$.
For many interesting functions, the proximal operator has a closed-form solution.
If $\prox_{(1/\alpha) g}$ is easy to compute, then so is $\prox_{\alpha g^*}$. This follows from Moreau's theorem,
\begin{equation}
\prox_{\alpha g^*}( x)=x-\alpha \prox_{(1/\alpha)g}((1/\alpha)x).
\label{eq:Moreau}
\end{equation}
The primal problem \eqref{eq:RCP} has the dual problem
\begin{equation}
\begin{array}{ll}
\underset{u\in \mathbb{R}^m}{\mbox{maximize}} &-g^*(u)-u^Tb\\
\mbox{subject to}&A^Tu=0.
\end{array}
\label{eq:dual}
\end{equation}
We say strong duality holds between \eqref{eq:RCP} and \eqref{eq:dual}
if their optimal values are the same.

Douglas--Rachford splitting (DRS) solves the optimization problem
of minimizing $f(x)+g(x)$ over $x\in \reals^n$
with
\begin{align*}
x^{k+1/2}&=\prox_{\alpha g}(z^k)\\
x^{k+1}&=\prox_{\alpha f}(2x^{k+1/2}-z^k)\\
z^{k+1}&=z^k+x^{k+1}-x^k
\end{align*}
where $\alpha>0$.
DRS converges to a solution if a solution exists, a dual solution exists, and strong duality holds, 
where the dual problem is maximizing $-f^*(-\nu)-g^*(\nu)$ over $\nu\in\reals^n$.

\section{Convergence analysis}
\label{s:theory}
In this section, we analyze the convergence of NCS under the following  assumptions:
\begin{gather*}
\text{$g$ is convex and $\prox_{\alpha g^*}$ is well defined.}
\tag{A1}
\label{assump1}\\
\text{A primal-dual solution pair exists and strong duality holds.}
\tag{A2}
\label{assump2}\\
M\succeq \alpha A^TA
\tag{A3}
\label{assump3}
\end{gather*}
Assumptions \eqref{assump1} and  \eqref{assump2} are standard.
Convexity of $g$ is a strong assumption, but it holds in many applications of interest.
The other parts of \eqref{assump1} and  \eqref{assump2} are made to rule out pathologies.
Assumption \eqref{assump3} is strong as it requires the entire spectrum of $M$ to dominate the spectrum of $\alpha A^TA$, but such assumptions are commonly required for preconditioned nonlinear iterations \cite{cai_deblurring2016}.

We now state our main results.
\begin{theorem}
\label{thm:conv}
Assume \eqref{assump1}, \eqref{assump2}, and \eqref{assump3}.
Then NCS converges in that
$x^k\rightarrow x^\star$ and $u^k\rightarrow u^\star$,
 where $x^\star$ and $u^\star$ are primal and dual solutions.
\end{theorem}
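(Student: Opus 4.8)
The plan is to recast NCS as a preconditioned proximal point iteration for the monotone inclusion that encodes the primal--dual optimality conditions, and then run the standard Fej\'er/Opial convergence argument in the (semi)metric induced by the preconditioner. First I would write the KKT conditions for the saddle-point formulation of \eqref{eq:RCP}: a pair $(x^\star,u^\star)$ is primal--dual optimal if and only if $A^Tu^\star=0$ and $Ax^\star-b\in\partial g^*(u^\star)$, that is, $0\in\mathcal{A}(w^\star)$ where $w=(x,u)$ and
\[
\mathcal{A}(x,u)=\begin{bmatrix} A^Tu\\ b-Ax+\partial g^*(u)\end{bmatrix}.
\]
Assumption \eqref{assump2} guarantees such a zero exists, and since $\mathcal{A}$ is the sum of a skew-symmetric affine map and the (maximal monotone) subdifferential $\partial g^*$, it is maximal monotone under \eqref{assump1}.

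Next I would verify that one NCS step is exactly the resolvent step $0\in\mathcal{A}(w^{k+1})+P(w^{k+1}-w^k)$ with
\[
P=\begin{bmatrix} M & -A^T\\ -A & \alpha^{-1}I\end{bmatrix}.
\]
Expanding the $u$-update via the subdifferential characterization of $\prox_{\alpha g^*}$ (Moreau's identity \eqref{eq:Moreau}) and the $x$-update via $M(x^{k+1}-x^k)=-A^Tu^k$ reproduces the two block rows of this inclusion after the substitution $2x^{k+1}-x^k=x^{k+1}+(x^{k+1}-x^k)$. A small but necessary point is that $M(x^{k+1}-x^k)=-A^Tu^k$ genuinely holds: since $M\succeq\alpha A^TA$ forces $\ker M\subseteq\ker A$, we get $\ran A^T\subseteq\ran M$, so $MM^+A^Tu^k=A^Tu^k$. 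The same Schur-complement computation that block-diagonalizes $P$ shows $P\succeq0$ is equivalent to $\alpha^{-1}I\succ0$ together with $M-\alpha A^TA\succeq0$, which is precisely \eqref{assump3}; explicitly,
\[
\|w\|_P^2=\|x\|_{M-\alpha A^TA}^2+\alpha^{-1}\|u-\alpha Ax\|^2.
\]

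With the reformulation in hand, the core estimate is routine monotone-operator bookkeeping. Writing $-P(w^{k+1}-w^k)\in\mathcal{A}(w^{k+1})$ and $0\in\mathcal{A}(w^\star)$, monotonicity of $\mathcal{A}$ gives $\langle w^{k+1}-w^\star,\,P(w^k-w^{k+1})\rangle\ge0$, and the cosine identity in the $P$-seminorm turns this into
\[
\|w^{k+1}-w^\star\|_P^2\le\|w^k-w^\star\|_P^2-\|w^{k+1}-w^k\|_P^2.
\]
This yields Fej\'er monotonicity in $\|\cdot\|_P$ and summability of $\sum_k\|w^{k+1}-w^k\|_P^2$, hence $\|w^{k+1}-w^k\|_P\to0$. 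I would then extract a convergent subsequence (finite dimension makes boundedness give strong subsequential limits), pass to the limit using the graph closedness of the maximal monotone $\mathcal{A}$ to identify the limit as a zero of $\mathcal{A}$, and finally upgrade subsequential to full convergence by the Opial argument, using that $\|w^k-w^\star\|_P$ is monotone for every solution $w^\star$.

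The step I expect to be the real obstacle is that \eqref{assump3} only gives $P\succeq0$, not $P\succ0$, so $\|\cdot\|_P$ is a genuine seminorm and the Fej\'er argument controls the iterates only modulo $\ker P$; the decomposition above shows the uncontrolled directions are exactly $\{(x,\alpha Ax):(M-\alpha A^TA)x=0\}$. To close this gap I would exploit the iteration's structure rather than the abstract theory: the $x$-increments lie in $\ran M\supseteq\ran A^T$, so the component of $x^k$ in $\ker M\subseteq\ker A$ is frozen at its initial value and can be matched by choosing the solution $x^\star$ accordingly, while this component is invisible to $A$, $g$, and the $u$-update; the remaining $x$-direction and the $u$-block are then pinned down through the $A$-coupling and the seminorm control of $\|u-\alpha Ax\|$, which together force $w^k\to w^\star$ in the true norm. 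Promoting seminorm convergence to genuine norm convergence of both blocks is where the careful work lies.
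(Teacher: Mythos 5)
Your route is genuinely different from the paper's: you view NCS as a degenerate preconditioned proximal-point iteration for the maximal monotone KKT operator $\mathcal{A}$ with preconditioner $P=\left[\begin{smallmatrix} M & -A^T\\ -A & \alpha^{-1}I\end{smallmatrix}\right]$, whereas the paper introduces the matrix square root $B$ of $(1/\alpha)M-A^TA$, embeds \eqref{eq:RCP} into an augmented primal--dual pair, and identifies NCS with Douglas--Rachford splitting applied to the augmented dual in the auxiliary variables $(v,\tilde v)=(u-\alpha(Ax-b),-\alpha Bx)$. Your reformulation is correct (the resolvent identity, the Schur-complement computation, and the identity $\|w\|_P^2=\|x\|_{M-\alpha A^TA}^2+\alpha^{-1}\|u-\alpha Ax\|^2$ all check out; $\|\cdot\|_P$ is, up to the factor $\alpha$, exactly the paper's seminorm $D$), and the Fej\'er monotonicity and summability of $\sum_k\|w^{k+1}-w^k\|_P^2$ are sound and reappear in the paper as Lemmas~\ref{lem:monotone} and \ref{lem:summable}. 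You also correctly identify the crux: $P$ is only positive semidefinite, so all of this controls the iterates only modulo $\ker P$.

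However, your resolution of that crux has a genuine gap, and it appears one step earlier than you place it: before you can ``extract a convergent subsequence,'' you need $(x^k,u^k)$ bounded in the true norm, and the Fej\'er estimate only gives boundedness of $\|x^k-x^\star\|_{M-\alpha A^TA}$ and $\|u^k-u^\star-\alpha A(x^k-x^\star)\|$. Your proposed repair is circular on the directions $V=\ker(M-\alpha A^TA)\cap\ran(M)$ (on which $A$ is injective but the first seminorm term vanishes): to bound $x^k$ along $V$ via the coupling term you need $u^k$ bounded, and to bound $u^k$ you need $Ax^k$ bounded. The missing observation is that the iterate itself, not just its increment, is a continuous function of the seminorm-controlled quantities: writing $x^{k+1}=\Pi_{\ker M}x^0+M^+\bigl(\alpha A^TAx^k-A^Tu^k+(M-\alpha A^TA)x^k\bigr)=\Pi_{\ker M}x^0-M^+A^T(u^k-\alpha Ax^k)+M^+B(\alpha Bx^k)$ exhibits $x^{k+1}$ as an affine image of $u^k-\alpha Ax^k$ and $Bx^k$, both of which are bounded by the Fej\'er estimate; boundedness of $u^{k+1}$ and the identification of the limit then follow. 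This factorization through $B=\bigl((1/\alpha)M-A^TA\bigr)^{1/2}$ is precisely what the paper's change of variables to $(v,\tilde v)$ accomplishes: the DRS map is (firmly) nonexpansive in the genuine Euclidean norm of the $(v,\tilde v)$ space, and $x^{k+1}$, $u^{k+1}$ are recovered from $(v^k,\tilde v^k)$ by fixed continuous maps, so convergence of $(v^k,\tilde v^k)$ transfers to $(x^k,u^k)$. Once boundedness is in hand, your Opial argument does close (two subsequential limits differ by an element of $\ker P$, whose $u$-component must vanish because dual solutions satisfy $A^Tu=0$, and whose $x$-component then lies in $\ker M$, which is frozen), so the gap is localized but real.
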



\begin{theorem}
\label{thm:main-rate}
Assume \eqref{assump1}, \eqref{assump2}, and \eqref{assump3}.
Furthermore, assume $g$ is $L$-Lipschitz continuous. Then 
NCS converges with the rate 
\begin{align*}
&g(Ax^{k+1}-b)-g(Ax^\star-b)\\
&\le \frac{1}{\alpha\sqrt{k+1}}
\bigg(
\|u^0-u^\star-\alpha A(x^0-x^\star)\|+\|u^\star\|+L+
\|x^0-x^\star\|_{(\alpha M-\alpha^2 A^TA)}
\bigg)^2.
\end{align*}

\end{theorem}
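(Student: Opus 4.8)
The plan is to recognize \eqref{eq:NCS} as a \emph{preconditioned Chambolle--Pock} iteration for the saddle-point reformulation $\min_x\max_u\,\mathcal{L}(x,u)$ with Lagrangian $\mathcal{L}(x,u)=\langle u,\,Ax-b\rangle-g^*(u)$, whose saddle points $(x^\star,u^\star)$ are exactly the primal-dual pairs of \eqref{eq:RCP}--\eqref{eq:dual}, characterized by $A^Tu^\star=0$ and $u^\star\in\partial g(Ax^\star-b)$. In this reading the primal step $x^{k+1}=x^k-M^+A^Tu^k$ uses the metric $M$ and the dual prox step uses step size $\alpha$, so that \eqref{assump3} is precisely the step-size condition making the iteration nonexpansive. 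Writing $z^k=(x^k,u^k)$ and $z^\star=(x^\star,u^\star)$, I would introduce the energy
\[
\mathcal{E}_k=\|z^k-z^\star\|_P^2=\|u^k-u^\star-\alpha A(x^k-x^\star)\|^2+\|x^k-x^\star\|_{\alpha M-\alpha^2A^TA}^2,
\]
which under \eqref{assump3} is the square of a genuine seminorm $\|\cdot\|_P$ and whose value at $k=0$ is exactly the (squared) bracket in the theorem. Note that $\ker M\subseteq\ker(A^TA)=\ker A$ forces $\ran A^T\subseteq\ran M$, so that $MM^+A^T=A^T$ and the primal step may be read as $A^Tu^k=M(x^k-x^{k+1})$.

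The core of the argument is a one-step \emph{master inequality}: for every comparison point $\zeta=(x,u)$,
\[
\alpha\big[\mathcal{L}(x^{k+1},u)-\mathcal{L}(x,u^{k+1})\big]\le\tfrac12\|z^k-\zeta\|_P^2-\tfrac12\|z^{k+1}-\zeta\|_P^2-\tfrac12\|z^{k+1}-z^k\|_P^2.
\]
I would derive this by combining the optimality condition $\tfrac1\alpha(u^k-u^{k+1})+A(2x^{k+1}-x^k)-b\in\partial g^*(u^{k+1})$ of the dual prox step, the primal identity $A^Tu^k=M(x^k-x^{k+1})$, convexity of $g^*$ (and the bilinear coupling), with \eqref{assump3} ensuring that the leftover quadratic is the nonnegative seminorm $\|\cdot\|_P$. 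Taking $\zeta=z^\star$ and using the saddle inequality $\mathcal{L}(x^{k+1},u^\star)-\mathcal{L}(x^\star,u^{k+1})\ge0$ yields the Fej\'er relation $\mathcal{E}_{k+1}\le\mathcal{E}_k-\|z^{k+1}-z^k\|_P^2$, hence $\mathcal{E}_k\le\mathcal{E}_0$ and $\sum_k\|z^{k+1}-z^k\|_P^2\le\mathcal{E}_0$. Since the underlying operator $T$ (with $z^{k+1}=Tz^k$) is nonexpansive in $\|\cdot\|_P$, the fixed-point residual is monotone, $\|z^{k+2}-z^{k+1}\|_P\le\|z^{k+1}-z^k\|_P$, and monotonicity plus summability give the per-iterate decay $\|z^{k+1}-z^k\|_P^2\le\mathcal{E}_0/(k+1)$.

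Next I would convert residual decay into the function-value bound. Let $\hat{u}^{k+1}\in\partial g(Ax^{k+1}-b)$ be the Fenchel maximizer, so that $g(Ax^{k+1}-b)=\mathcal{L}(x^{k+1},\hat{u}^{k+1})$ exactly, and note that $L$-Lipschitz continuity of $g$ forces $\|\hat{u}^{k+1}\|\le L$. Since $g(Ax^\star-b)=\sup_u\mathcal{L}(x^\star,u)\ge\mathcal{L}(x^\star,u^{k+1})$, we get $g(Ax^{k+1}-b)-g(Ax^\star-b)\le\mathcal{L}(x^{k+1},\hat{u}^{k+1})-\mathcal{L}(x^\star,u^{k+1})$. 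Applying the master inequality with the ($k$-dependent) comparison point $\zeta=(x^\star,\hat{u}^{k+1})$, dropping the nonpositive residual term, and using the polarization identity $\tfrac12\|a\|_P^2-\tfrac12\|b\|_P^2=\langle a-b,\tfrac12(a+b)\rangle_P$ with $a=z^k-\zeta$, $b=z^{k+1}-\zeta$, I obtain
\[
\alpha\big[g(Ax^{k+1}-b)-g(Ax^\star-b)\big]\le\big\langle z^k-z^{k+1},\,\tfrac12(z^k+z^{k+1})-\zeta\big\rangle_P\le\|z^{k+1}-z^k\|_P\,\big\|\tfrac12(z^k+z^{k+1})-\zeta\big\|_P.
\]

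To finish, I would bound the two factors. The residual factor is $\le\sqrt{\mathcal{E}_0}/\sqrt{k+1}$ by the previous step. For the second factor, the triangle inequality gives $\|\tfrac12(z^k+z^{k+1})-\zeta\|_P\le\tfrac12\|z^k-\zeta\|_P+\tfrac12\|z^{k+1}-\zeta\|_P$, and each term obeys $\|z^j-\zeta\|_P\le\|z^j-z^\star\|_P+\|z^\star-\zeta\|_P=\sqrt{\mathcal{E}_j}+\|u^\star-\hat{u}^{k+1}\|\le\sqrt{\mathcal{E}_0}+\|u^\star\|+L$, using $\mathcal{E}_j\le\mathcal{E}_0$ and $\|\hat{u}^{k+1}\|\le L$. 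Combining, dividing by $\alpha$, and over-estimating $\sqrt{\mathcal{E}_0}\le\|u^0-u^\star-\alpha A(x^0-x^\star)\|+\|x^0-x^\star\|_{\alpha M-\alpha^2A^TA}$ collapses everything into the stated perfect-square bound with prefactor $1/(\alpha\sqrt{k+1})$. I expect the main obstacle to be the derivation of the master inequality --- the bookkeeping that extracts precisely the seminorm $\|\cdot\|_P$ and uses \eqref{assump3} to guarantee nonnegativity of the quadratic remainder --- together with the argument that $T$ is genuinely nonexpansive (not merely convergent) in $\|\cdot\|_P$, which is what upgrades summability of the residual into the monotone $\mathcal{O}(1/k)$ decay required for a \emph{non-ergodic}, last-iterate guarantee; the remaining steps are Cauchy--Schwarz and constant collection.
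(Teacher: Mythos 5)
Your argument is correct and reaches exactly the paper's bound, but by a genuinely different route. The paper never works with the Lagrangian or the metric $P$ directly: it introduces the matrix square root $B$ of $(1/\alpha)M-A^TA$, embeds NCS into Douglas--Rachford splitting applied to an augmented dual problem in auxiliary variables $(v,\tilde v)$, and imports monotonicity and summability of the residual from the classical \emph{Euclidean} firm nonexpansiveness of DRS, the seminorm $D(x,u)=\|\cdot\|_P$ appearing only as the pullback of $\|(v,\tilde v)\|$. You instead read NCS as a preconditioned proximal-point/Chambolle--Pock iteration in the degenerate metric $Q=\alpha^{-1}P$ and prove a one-step master inequality; I checked that with the $\theta=1$ extrapolation the bilinear cross terms do collapse exactly into $\tfrac12\|z^k-\zeta\|_P^2-\tfrac12\|z^{k+1}-\zeta\|_P^2-\tfrac12\|z^{k+1}-z^k\|_P^2$, and your observation that $\ran A^T\subseteq\ran M$ under \eqref{assump3} (so $M(x^k-x^{k+1})=A^Tu^k$) is exactly what makes the primal step usable despite the pseudoinverse. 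Your handling of the Lipschitz assumption also differs: the paper evaluates $g$ at the auxiliary point $z^{k+1}=A(2x^{k+1}-x^k)-b-\alpha^{-1}(u^{k+1}-u^k)$ where $u^{k+1}\in\partial g(z^{k+1})$ and pays $L\|Ax^{k+1}-b-z^{k+1}\|$, whereas you use $\|\hat u^{k+1}\|\le L$ for a subgradient at $Ax^{k+1}-b$ together with a $k$-dependent comparison point $\zeta=(x^\star,\hat u^{k+1})$; both yield the identical constant $\sqrt{\mathcal{E}_0}(\sqrt{\mathcal{E}_0}+\|u^\star\|+L)$. What your approach buys is a self-contained primal--dual derivation that avoids constructing $B$ and the augmented problem; what the paper's reduction buys is that every nonexpansiveness claim is inherited for free from standard DRS theory.

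The one load-bearing claim you assert rather than prove is that the iteration map is nonexpansive in the seminorm $\|\cdot\|_P$, which is what upgrades summability of $\|z^{k+1}-z^k\|_P^2$ to the monotone $\mathcal{O}(1/(k+1))$ last-residual decay. This is true and not hard, but it needs an argument beyond the master inequality: write the iteration as $0\in F(z^{k+1})+Q(z^{k+1}-z^k)$ for the monotone KKT operator $F(x,u)=(A^Tu,\,-Ax+b+\partial g^*(u))$, subtract consecutive inclusions, pair with $z^{k+2}-z^{k+1}$, and use monotonicity of $F$ plus Cauchy--Schwarz for the semi-inner product induced by $Q$; this works even though $Q$ is only positive semidefinite. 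Supply that step (or fall back on the paper's DRS reduction, where it is classical) and your proof is complete.
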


\subsection{Discussion}
Theorem~\ref{thm:conv} establishes convergence
and
Theorem~\ref{thm:main-rate} establishes a rate of convergence
with the additional assumption of Lipschitz continuity,
which holds locally around the solution for the CT imaging problem.

For a given $\alpha$, the bound of Theorem~\ref{thm:main-rate} is minimized when $M=\alpha A^TA$, which corresponds to ADMM.
When $M\approx \alpha A^TA$, we can expect NCS to converge with a similar number of iterations compared to ADMM.
Theorem~\ref{thm:main-rate} to a certain extent explains why ADMM tends to converge in fewer iterations than PDHG.

At the same time, we would like to point out that the rate of Theorem~\ref{thm:main-rate} should not be considered a tight estimate.
The theoretical convergence rates of splitting methods like PDHG and ADMM 
are upper bounds, and they rarely track the empirical rate;
the rates come from a worst-case analysis, and the worst case rarely occurs.
Nevertheless, the qualitative message of Theorem~\ref{thm:main-rate} persists in experiments.

The rate of Theorem~\ref{thm:main-rate} is of order $\mathcal{O}(1/\sqrt{k})$,
rather than $\mathcal{O}(1/k)$,
because we analyze convergence of the actual iterates, rather than
the ``ergodic'' iterates.
It is possible to perform a similar analysis with the ergodic iterates
to obtain a $\mathcal{O}(1/k)$-rate and have a constant
depending on $\alpha M-\alpha^2 A^TA$ in a similar manner.



\subsection{Proofs}
We now prove Theorems~\ref{thm:conv} and \ref{thm:main-rate}.
Assume \eqref{assump1}, \eqref{assump2}, and \eqref{assump3} throughout this section.
In \cite{connor2017}, O'Connor and Vandenberghe presented a reduction of
PDHG to DRS. Our proof relies on a similar reduction inspired by theirs.

We write $x^\star$ for a primal solution and $u^\star$ for a dual solution throughout.
Define the seminorm $D(x,u):\reals^{n+m}\rightarrow\reals$ with
\begin{align*}
D^2(x,u)
&=
\begin{bmatrix}
x^T&u^T
\end{bmatrix}
\begin{bmatrix}
\alpha M&-\alpha A^T\\
-\alpha A& I
\end{bmatrix}
\begin{bmatrix}
x\\u
\end{bmatrix}\\
&=\|u-\alpha Ax\|^2+
\|x\|_{(\alpha M-\alpha^2 A^TA)}^2.
\end{align*}
To clarify, we mean $D^2(x,u)=(D(x,u))^2$.
Let $B\in \reals^{n\times n}$ be the positive semidefinite matrix satisfying
\[
B^2=(1/\alpha) M-A^TA,
\]
i.e., $B$ is the matrix square root of $(1/\alpha) M-A^TA$.
For any set $S$, define the indicator function
\[
\delta_{S}(x)=\left\{
\begin{array}{ll}
0&\text{if }x\in S\\
\infty&\text{otherwise}.
\end{array}
\right.
\]
We say a mapping $T:\mathbb{R}^\ell\rightarrow\mathbb{R}^\ell$ is firmly nonexpansive if
\[
\|T(a)-T(b)\|^2\le
\|a-b\|^2-\|(a-T(a))-(b-T(b))\|^2
\]
for any $a,b\in \reals^\ell$.


Consider the primal-dual problem pair
\begin{gather}
\underset{z \in \reals^m,\tilde{z}\in \reals^n}{\mbox{minimize}}\qquad
g(z)+\delta_{\{(Ax-b,Bx)\,|\,x\in \reals^n\}}(z,\tilde{z})
\label{eq:p-prime}\\
\underset{u ,\tilde{u}\in \reals^m}{\mbox{maximize}}\qquad
-g^*(u)-\delta_{\{0\}}(-A^Tu-B^T\tilde{u})-b^Tu-\delta_{\{0\}}(\tilde{u}),
\label{eq:d-prime}
\end{gather}
which are equivalent to the primal-dual problem pair \eqref{eq:RCP} and \eqref{eq:dual}.
We apply DRS to the equivalent dual problem \eqref{eq:d-prime} to get
\begin{align}
x^{k+1}&=-\frac{1}{\alpha}(A^TA+B^TB)^{+}(A^T(v^k-\alpha b)+B^T\tilde{v}^k)
\nonumber\\
u ^{k+1/2}&=v^k+\alpha (Ax^{k+1}-b)\nonumber\\
\tilde{u}^{k+1/2}&=\tilde{v}^k+\alpha B x^{k+1}\nonumber\\
u^{k+1}&=\prox_{\alpha g^*}(v^k+2\alpha Ax^{k+1})
\label{eq:main-iter}
\\
\tilde{u}^{k+1}&=0\nonumber\\
v^{k+1}&=v^k+u^{k+1}-u^{k+1/2}\nonumber\\
\tilde{v}^{k+1}&=\tilde{v}^k+\tilde{u}^{k+1}-\tilde{u}^{k+1/2}.\nonumber
\end{align}
We can simplify this to
\begin{align}
x^{k+1}&=x^k-\frac{1}{\alpha}(A^TA+B^TB)^{+}A^Tu^k\nonumber\\
u^{k+1}&=\prox_{\alpha g^*}(u^k+\alpha A(2x^{k+1}-x^k)-\alpha b)\nonumber\\
v^{k+1}&=u^{k+1}-\alpha( Ax^{k+1}-b)\label{eq:main-iter2}\\
\tilde{v}^{k+1}&=-\alpha Bx^{k+1}\nonumber.
\end{align}
Note that \eqref{eq:main-iter2} is NCS with the additional $v^k$- and $\tilde{v}^k$-iterates, which do not affect the $x^k$- and $u^k$-iterates.
\begin{fact}
\label{lem:fixed-points}
Under assumption \eqref{assump2},
iteration \eqref{eq:main-iter} has fixed points
\[
(v^\star,\tilde{v}^{\star})=
(u^\star-\alpha (Ax^\star-b),-\alpha Bx^\star)
\]
where $x^\star$ and $u^\star$ are any primal and dual solutions
to \eqref{eq:RCP} and \eqref{eq:dual}.\end{fact}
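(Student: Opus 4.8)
The plan is to verify the claimed fixed point by direct substitution into the DRS iteration \eqref{eq:main-iter}. Setting $(v^k,\tilde v^k)=(v^\star,\tilde v^\star)$ with $v^\star=u^\star-\alpha(Ax^\star-b)$ and $\tilde v^\star=-\alpha Bx^\star$, I would first evaluate the $x$-update. Substituting the two expressions and using $B^TB=(1/\alpha)M-A^TA$ collapses the bracket $A^T(v^\star-\alpha b)+B^T\tilde v^\star$ into $A^Tu^\star-\alpha(A^TA+B^TB)x^\star$. Here the first optimality condition supplied by \eqref{assump2}, namely dual feasibility $A^Tu^\star=0$ from \eqref{eq:dual}, kills the leading term, leaving $x^{k+1}=(A^TA+B^TB)^+(A^TA+B^TB)\,x^\star$.

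The next step addresses the pseudoinverse. Because $M$ need only satisfy \eqref{assump3} (and may be singular), $x^{k+1}$ is the orthogonal projection of $x^\star$ onto $\ran(A^TA+B^TB)$, so $x^{k+1}$ and $x^\star$ differ only by an element of $\ker(A^TA+B^TB)=\ker A\cap\ker B$ (both summands being positive semidefinite). I would record the consequence that $Ax^{k+1}=Ax^\star$ and $Bx^{k+1}=Bx^\star$; since every remaining line of \eqref{eq:main-iter} sees $x^{k+1}$ only through $Ax^{k+1}$ and $Bx^{k+1}$, it is harmless that $x^{k+1}\neq x^\star$ in general.

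With this in hand the remaining updates fall out by bookkeeping. The half-step $u^{k+1/2}=v^\star+\alpha(Ax^{k+1}-b)$ returns $u^\star$ by the definition of $v^\star$, and $\tilde u^{k+1/2}=\tilde v^\star+\alpha Bx^{k+1}$ returns $0$ by the definition of $\tilde v^\star$. The crux is the prox step: one must check that $u^{k+1}=\prox_{\alpha g^*}(\,\cdot\,)$ reproduces $u^\star$. This is exactly where the second optimality condition from \eqref{assump2} enters, namely the relation between $u^\star$ and $Ax^\star-b$ (equivalently $u^\star=\prox_{\alpha g^*}(u^\star+\alpha(Ax^\star-b))$), together with $Ax^{k+1}=Ax^\star$. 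Finally $\tilde u^{k+1}=0$ by construction, and substituting $u^{k+1}=u^\star$, $u^{k+1/2}=u^\star$, $\tilde u^{k+1}=\tilde u^{k+1/2}=0$ into the $v$- and $\tilde v$-updates returns $v^{k+1}=v^\star$ and $\tilde v^{k+1}=\tilde v^\star$, completing the verification.

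I expect the main obstacle to be the two places where singularity and convex duality interact with the bare algebra: first, justifying that the projection $(A^TA+B^TB)^+(A^TA+B^TB)x^\star$ may replace $x^\star$ in all downstream updates via the kernel identity $\ker(A^TA+B^TB)=\ker A\cap\ker B$; and second, invoking the proximal form of the optimality conditions so that the prox step returns $u^\star$ exactly. Conceptually this is the statement that fixed points of DRS applied to \eqref{eq:d-prime} are in bijection with primal--dual solutions of the equivalent pair \eqref{eq:RCP}--\eqref{eq:dual}; the substitution above simply traces that bijection through the change of variables to pin down the explicit formula for $(v^\star,\tilde v^\star)$.
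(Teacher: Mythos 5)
Your verification is correct, but it takes a genuinely different route from the paper's. The paper does not substitute the candidate fixed point into the iteration at all: it observes that \eqref{eq:main-iter} is DRS applied to the dual problem \eqref{eq:d-prime}, notes that under \eqref{assump2} the equivalent pair \eqref{eq:p-prime}--\eqref{eq:d-prime} has the primal--dual solution $\left((Ax^\star-b,Bx^\star),(u^\star,0)\right)$ with strong duality, and then invokes the general correspondence between DRS fixed points and primal--dual solution pairs (Theorem~7.1 of \cite{bauschke2012} together with Fact~3.3 of \cite{Bauschke2017}) to read off $(v^\star,\tilde v^\star)$. Your direct substitution is more elementary and self-contained, and it makes explicit exactly where each hypothesis enters: dual feasibility $A^Tu^\star=0$ in the $x$-update, the kernel identity $\ker(A^TA+B^TB)=\ker A\cap\ker B$ (valid since both summands are positive semidefinite) to neutralize the pseudoinverse, and the proximal form $u^\star=\prox_{\alpha g^*}(u^\star+\alpha(Ax^\star-b))$ of the optimality condition in the prox step; the paper's citation-based argument is shorter and additionally characterizes the fixed-point set rather than merely exhibiting members of it, but hides these mechanics. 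One caution if you execute the substitution literally: the $u^{k+1}$-update as printed in \eqref{eq:main-iter} reads $\prox_{\alpha g^*}(v^k+2\alpha Ax^{k+1})$, which is inconsistent with the simplification \eqref{eq:main-iter2} and with \eqref{eq:NCS}; it should read $\prox_{\alpha g^*}\left(v^k+2\alpha(Ax^{k+1}-b)\right)$, and with that correction the prox argument at the fixed point becomes $v^\star+2\alpha(Ax^\star-b)=u^\star+\alpha(Ax^\star-b)$, so your final step goes through exactly as you describe.
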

\begin{proof}
Although Fact~\ref{lem:fixed-points} follows from standard arguments of DRS theory,
we are not aware of a single theorem that we can directly cite for Fact~\ref{lem:fixed-points}.
We therefore briefly lay out this standard argument.

By assumption \eqref{assump2}, 
the primal-dual problem pair \eqref{eq:RCP} and \eqref{eq:dual}
has a primal-dual solution pair $x^\star$ and $u^\star$ 
and strong duality holds.
The equivalent primal-dual problem pair \eqref{eq:p-prime} and \eqref{eq:d-prime}
therefore 
has a primal-dual solution pair $(Ax^\star-b,Bx^\star)$ and $(u^\star,0)$
and strong duality holds.
Combining Theorem~7.1 of \cite{bauschke2012}, Fact~3.3 of \cite{Bauschke2017},
and standard convex duality,
while remembering that \eqref{eq:main-iter} is 
DRS applied to the equivalent dual problem \eqref{eq:d-prime},
 we arrive at the stated result.
\end{proof}

\begin{proof}[Proof of Theorem \ref{thm:conv}]
The DRS iteration represents a fixed-point iteration with respect to a 
firmly nonexpansive mapping; i.e.,
the mapping $(v^{k},\tilde{v}^k)\mapsto (v^{k+1},\tilde{v}^{k+1})$
is firmly nonexpansive.
A fixed-point iteration with respect to a firmly nonexpansive mapping
converges to a fixed point provided a fixed point exists
\cite{mann1953,krasnoselskii1955,BauschkeCombettes2017_convex}.
So with Fact~\ref{lem:fixed-points}, we conclude
\[
v^k\rightarrow v^\star=u^\star-\alpha (Ax^\star-b)\qquad 
\tilde{v}^k\rightarrow \tilde{v}^{\star}= -\alpha Bx^\star
\]
for some primal and dual solutions $x^\star$ and $u^\star$.

Substituting this into \eqref{eq:main-iter}, we get
\[
x^{k}\rightarrow \underbrace{(A^TA+B^TB)^+(A^TA+B^TB)x^\star}_{:=\bar{x}^\star}.
\]
Note that $A\bar{x}^\star=Ax^\star$, since
\[
(A^TA+B^TB)^+(A^TA+B^TB)|_{\mathcal{N}(A)^\perp}=I|_{\mathcal{N}(A)^\perp}.
\]
So if $x^\star$ is optimal for the primal problem, then so is $\bar{x}^\star$.

Plugging these into \eqref{eq:main-iter}, we get
\[
u^k\rightarrow \prox_{\alpha g^*}(u^\star+\alpha (A\bar{x}^\star-b)).
\]
Standard arguments from convex duality or Fact~\ref{lem:fixed-points}
tell us that
\[
\prox_{\alpha g^*}(u^\star+\alpha (A\bar{x}^\star-b))=u^\star,
\]
and we conclude $u^k\rightarrow u^\star$.
\end{proof}

In the proofs of Lemmas~\ref{lem:monotone} and \ref{lem:summable} we use the identities
\begin{align*}
D^2(x^{k+1}-x^k,u^{k+1}-u^k)
&=\|v^{k+1}-v^k\|^2+\|\tilde{v}^{k+1}-\tilde{v}^k\|^2\\
D^2(x^{k}-x^\star,u^{k}-u^\star)
&=\|v^{k}-v^\star\|^2+\|\tilde{v}^{k}-\tilde{v}^\star\|^2.
\end{align*}

\begin{lemma}
\label{lem:monotone}
The two sequences
\[
D(x^{k+1}-x^k,u^{k+1}-u^k)
\quad\text{and}\quad
D(x^{k}-x^\star,u^{k}-u^\star)
\]
are nonincreasing with $k=0,1,\dots$.
\end{lemma}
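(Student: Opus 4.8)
The plan is to leverage the fact, already established in the proof of Theorem~\ref{thm:conv}, that the map $T$ sending $(v^k,\tilde v^k)\mapsto(v^{k+1},\tilde v^{k+1})$ through \eqref{eq:main-iter} is firmly nonexpansive, and hence in particular nonexpansive. Combined with the two displayed identities immediately preceding the lemma, this reduces both monotonicity claims—which are phrased in the $D$-seminorm—to elementary Euclidean norm comparisons for the iterates $w^k:=(v^k,\tilde v^k)$ under a nonexpansive map. Writing $w^\star:=(v^\star,\tilde v^\star)$ for the fixed point supplied by Fact~\ref{lem:fixed-points}, the identities read $D^2(x^{k+1}-x^k,u^{k+1}-u^k)=\|w^{k+1}-w^k\|^2$ and $D^2(x^k-x^\star,u^k-u^\star)=\|w^k-w^\star\|^2$, so it suffices to prove the two inequalities $\|w^{k+1}-w^\star\|\le\|w^k-w^\star\|$ and $\|w^{k+2}-w^{k+1}\|\le\|w^{k+1}-w^k\|$.

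For the second sequence I would invoke Fej\'er monotonicity: since $w^\star=T(w^\star)$ by Fact~\ref{lem:fixed-points} and $w^{k+1}=T(w^k)$ by \eqref{eq:main-iter}, nonexpansiveness gives $\|w^{k+1}-w^\star\|=\|T(w^k)-T(w^\star)\|\le\|w^k-w^\star\|$, and translating back through the second identity yields the desired monotonicity of $D(x^k-x^\star,u^k-u^\star)$. For the first sequence I would instead apply nonexpansiveness to two \emph{consecutive} iterates, $\|w^{k+2}-w^{k+1}\|=\|T(w^{k+1})-T(w^k)\|\le\|w^{k+1}-w^k\|$; this is the standard statement that the fixed-point residual is nonincreasing along a nonexpansive iteration, and notably it requires no fixed point at all. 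Translating through the first identity gives the monotonicity of $D(x^{k+1}-x^k,u^{k+1}-u^k)$.

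I do not expect a genuine obstacle here, since the analytic heavy lifting—firm nonexpansiveness of the DRS operator and the existence of the fixed point—is already in place. The points deserving a sentence of care are: that $T$ is single-valued, which holds because $\prox_{\alpha g^*}$ is well defined under \eqref{assump1} and the remaining steps of \eqref{eq:main-iter} are affine, so that one may legitimately speak of applying a nonexpansive operator; the recognition that the weaker nonexpansive inequality (rather than the full firmly nonexpansive one) already suffices for both claims; and, if one wishes to keep the proof self-contained, a brief verification of the two stated identities via the relations $v^{k+1}-v^k=(u^{k+1}-u^k)-\alpha A(x^{k+1}-x^k)$ and $\tilde v^{k+1}-\tilde v^k=-\alpha B(x^{k+1}-x^k)$ from \eqref{eq:main-iter2}, together with $B^TB=(1/\alpha)M-A^TA$.
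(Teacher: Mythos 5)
Your proposal is correct and follows essentially the same route as the paper: both claims are reduced, via the two displayed identities, to nonexpansiveness of the DRS map applied to consecutive iterates (for the residual sequence) and to an iterate paired with the fixed point from Fact~\ref{lem:fixed-points} (for the distance-to-solution sequence). The additional remarks on single-valuedness and on verifying the identities are sound but not needed beyond what the paper already records.
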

\begin{proof}
DRS maps 
$(v^{k},\tilde{v}^k)\mapsto (v^{k+1},\tilde{v}^{k+1})$ 
and 
$(v^{k+1},\tilde{v}^{k+1})\mapsto (v^{k+2},\tilde{v}^{k+2})$.
Since DRS is a nonexpansive iteration, we have
\[
\|v^{k+2}-v^{k+1}\|^2+
\|\tilde{v}^{k+2}-\tilde{v}^{k+1}\|^2
\le 
\|v^{k+1}-v^{k}\|^2+
\|\tilde{v}^{k+1}-\tilde{v}^{k}\|^2,
\]
which is the first stated result.
Likewise, DRS maps 
$(v^{k},\tilde{v}^k)\mapsto (v^{k+1},\tilde{v}^{k+1})$ 
and 
$(u^\star-\alpha (Ax^\star-b),-\alpha Bx^\star)$ is a fixed point.
With the same reasoning, we get the second stated result.
\end{proof}

\begin{lemma}
\label{lem:summable}
\begin{align*}
D^2(x^{k+1}-x^k,u^{k+1}-u^k)\le
\frac{1}{k+1}D^2(x^{0}-x^\star,u^{0}-u^\star)
\end{align*}
\end{lemma}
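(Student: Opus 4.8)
The plan is to reduce the claim to a standard residual bound for the firmly nonexpansive DRS iteration and then telescope. Let $S$ denote the firmly nonexpansive map $(v^k,\tilde v^k)\mapsto(v^{k+1},\tilde v^{k+1})$ identified in the proof of Theorem~\ref{thm:conv}, acting on $\reals^{m}\times\reals^{n}$, and let $(v^\star,\tilde v^\star)$ be the fixed point supplied by Fact~\ref{lem:fixed-points}. Using the two identities stated immediately before Lemma~\ref{lem:monotone}, the inequality to be proved is equivalent to the residual bound
\[
\|v^{k+1}-v^k\|^2+\|\tilde v^{k+1}-\tilde v^k\|^2\le\frac{1}{k+1}\big(\|v^0-v^\star\|^2+\|\tilde v^0-\tilde v^\star\|^2\big),
\]
so it suffices to establish this bound for the iteration $S$.

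First I would apply the defining inequality of firm nonexpansiveness to the pair $(v^k,\tilde v^k)$ and $(v^\star,\tilde v^\star)$. Because the latter is a fixed point, the term $(b-S(b))$ in that inequality vanishes, and the inequality collapses to the Fej\'er-type decrease
\[
\|v^{k+1}-v^\star\|^2+\|\tilde v^{k+1}-\tilde v^\star\|^2\le\|v^k-v^\star\|^2+\|\tilde v^k-\tilde v^\star\|^2-\big(\|v^{k+1}-v^k\|^2+\|\tilde v^{k+1}-\tilde v^k\|^2\big).
\]
This is the one place where firm nonexpansiveness, rather than the mere nonexpansiveness used in Lemma~\ref{lem:monotone}, is essential, since it is precisely what produces the negative residual term on the right-hand side.

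Next I would telescope. Summing the displayed decrease over $k=0,\dots,N-1$ cancels the distance-to-fixed-point terms in pairs, and discarding the nonnegative terminal distance leaves
\[
\sum_{k=0}^{N-1}\big(\|v^{k+1}-v^k\|^2+\|\tilde v^{k+1}-\tilde v^k\|^2\big)\le\|v^0-v^\star\|^2+\|\tilde v^0-\tilde v^\star\|^2.
\]
Finally I would invoke the first statement of Lemma~\ref{lem:monotone}: the residuals $\|v^{k+1}-v^k\|^2+\|\tilde v^{k+1}-\tilde v^k\|^2$ are nonincreasing in $k$, so each of the $N$ summands is at least the last one. Hence $N$ times the residual at index $N-1$ is a lower bound for the sum, and rearranging with $N=k+1$ gives the residual bound above. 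Translating back through the two identities yields the statement of the lemma.

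The only genuinely delicate point is the very first step, namely that the reduction underlying Theorem~\ref{thm:conv} presents $S$ as \emph{firmly} nonexpansive on all of $\reals^{m}\times\reals^{n}$ and not merely nonexpansive; the quantity $\|v^{k+1}-v^k\|^2+\|\tilde v^{k+1}-\tilde v^k\|^2$ appearing on the left of the target bound is exactly the residual that firm nonexpansiveness contributes at the fixed point. Everything after that is a routine summation combined with the monotonicity already established in Lemma~\ref{lem:monotone}.
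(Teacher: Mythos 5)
Your proposal is correct and follows essentially the same route as the paper: apply firm nonexpansiveness of the DRS map at $(v^k,\tilde v^k)$ against the fixed point to obtain the Fej\'er-type decrease, telescope, and then use the monotonicity of the residuals from Lemma~\ref{lem:monotone} to bound the last term by the average. The only cosmetic difference is that the paper carries out the summation directly in the $D^2$ notation rather than in the $(v,\tilde v)$ coordinates.
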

\begin{proof}
Since DRS is a firmly nonexpansive iteration, we have
\[
\|v^{k+1}-v^\star\|^2+\|\tilde{v}^{k+1}-\tilde{v}^\star\|^2\le 
\|v^{k}-v^\star\|^2+\|\tilde{v}^{k}-\tilde{v}^\star\|^2-
\left(\|v^{k+1}-v^k\|^2+\|\tilde{v}^{k+1}-\tilde{v}^k\|^2\right)
\]
which we can rewrite as
\[
D^2(x^{k+1}-x^\star,u^{k+1}-u^\star)
\le 
D^2(x^{k}-x^\star,u^{k}-u^\star)-
D^2(x^{k+1}-x^k,u^{k+1}-u^k).
\]
Summing this, we get
\begin{align*}
\sum^k_{i=0}
D^2(x^{i+1}-x^i,u^{i+1}-u^i)&\le 
D^2(x^{0}-x^\star,u^{0}-u^\star)-
D^2(x^{k+1}-x^\star,u^{k+1}-u^\star)\\
&\le 
D^2(x^{0}-x^\star,u^{0}-u^\star).
\end{align*}
Since $D^2(x^{k+1}-x^k,u^{k+1}-u^k)$ is nonincreasing,
by Lemma~\ref{lem:monotone}, we have
\begin{align*}
(k+1)
D^2(x^{k+1}-x^k,u^{k+1}-u^k)\le
\sum^k_{i=0}
D^2(x^{i+1}-x^i,u^{i+1}-u^i).
\end{align*}
The stated result follows from combining these inequalities.
\end{proof}

\begin{lemma}
\label{lem:z-rate}
Define
\[
z^{k+1}=A(2x^{k+1}-x^k)-b-\frac{1}{\alpha}(u^{k+1}-u^{k}).
\]
Then
\begin{align*}
g(z^{k+1})-g(Ax^\star-b)\le \frac{1}{\alpha\sqrt{k+1}}
D(x^{0}-x^\star,u^{0}-u^\star)\left(D(x^{0}-x^\star,u^{0}-u^\star)+\|u^\star\|\right).
\end{align*}
\end{lemma}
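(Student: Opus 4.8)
The plan is to start from the subgradient characterization of the prox step. Writing out the optimality condition for $u^{k+1}=\prox_{\alpha g^*}(u^k+\alpha(A(2x^{k+1}-x^k)-b))$ shows that $z^{k+1}$ is exactly $(w-u^{k+1})/\alpha$, where $w$ is the prox input, so $z^{k+1}\in\partial g^*(u^{k+1})$, equivalently $u^{k+1}\in\partial g(z^{k+1})$. The convexity subgradient inequality at the point $Ax^\star-b$ then gives the one-line bound
\[
g(z^{k+1})-g(Ax^\star-b)\le\langle u^{k+1},\,z^{k+1}-(Ax^\star-b)\rangle,
\]
and the whole task reduces to estimating this inner product.

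Next I would rewrite the right-hand side in the DRS coordinates $v^k,\tilde v^k$ used throughout the section. Using $v^{k+1}=u^{k+1}-\alpha(Ax^{k+1}-b)$ and $v^\star=u^\star-\alpha(Ax^\star-b)$ one checks the identity $\alpha(z^{k+1}-(Ax^\star-b))=-(v^{k+1}-v^k)+\alpha A(x^{k+1}-x^\star)$. Splitting $u^{k+1}=u^\star+(u^{k+1}-u^\star)$ and invoking dual feasibility $A^Tu^\star=0$, the part carrying $u^\star$ collapses to $-\langle u^\star,v^{k+1}-v^k\rangle$, which Cauchy--Schwarz bounds by $\|u^\star\|\,\|v^{k+1}-v^k\|\le\|u^\star\|\,D(x^{k+1}-x^k,u^{k+1}-u^k)$. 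This produces the $\|u^\star\|$ term in the claim.

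The crux is the leftover part, which after the split reads $-\langle u^{k+1}-u^\star,v^{k+1}-v^k\rangle+\alpha\langle A^Tu^{k+1},x^{k+1}-x^\star\rangle$; the second summand is the obstacle, since $A^Tu^{k+1}$ is not directly controlled by the current residual. The key is to pair it with the $\tilde v$ (equivalently $B$) contribution that the seminorm $D$ carries but which the scalar inner product does not yet display. Concretely, I would aim to prove the exact identity
\[
-\langle u^{k+1}-u^\star,v^{k+1}-v^k\rangle+\alpha\langle A^Tu^{k+1},x^{k+1}-x^\star\rangle=-\langle v^{k+1}-v^k,v^{k+1}-v^\star\rangle-\langle\tilde v^{k+1}-\tilde v^k,\tilde v^{k+1}-\tilde v^\star\rangle.
\]
Establishing it uses $B^2=(1/\alpha)M-A^TA$ to convert the $\tilde v$ inner product into $M$- and $A^TA$-weighted terms, the $x$-update relation $A^Tu^k=-M(x^{k+1}-x^k)$ (valid because $A^Tu^k\in\ran(A^T)\subseteq\ran(M)$, so that $MM^+A^Tu^k=A^Tu^k$), and the symmetry of $M$; the stray $A^TA$- and $M$-cross terms then cancel in pairs. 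Once this holds, the leftover equals $-\langle\mathbf a,\mathbf b\rangle$ for $\mathbf a=(v^{k+1}-v^k,\tilde v^{k+1}-\tilde v^k)$ and $\mathbf b=(v^{k+1}-v^\star,\tilde v^{k+1}-\tilde v^\star)$, so Cauchy--Schwarz together with the two identities stated just before Lemma~\ref{lem:monotone} bounds it by $D(x^{k+1}-x^k,u^{k+1}-u^k)\,D(x^{k+1}-x^\star,u^{k+1}-u^\star)$.

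Combining the two estimates yields the per-iteration bound
\[
g(z^{k+1})-g(Ax^\star-b)\le\tfrac1\alpha D(x^{k+1}-x^k,u^{k+1}-u^k)\bigl(D(x^{k+1}-x^\star,u^{k+1}-u^\star)+\|u^\star\|\bigr).
\]
Finally I would bound the residual factor with Lemma~\ref{lem:summable}, giving $D(x^{k+1}-x^k,u^{k+1}-u^k)\le(k+1)^{-1/2}D(x^0-x^\star,u^0-u^\star)$, and the distance factor with the monotonicity of Lemma~\ref{lem:monotone}, giving $D(x^{k+1}-x^\star,u^{k+1}-u^\star)\le D(x^0-x^\star,u^0-u^\star)$; together these produce exactly the stated rate. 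I expect the algebraic cancellation establishing the displayed identity to be the only delicate step, as everything else is just the subgradient inequality and Cauchy--Schwarz.
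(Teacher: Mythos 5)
Your proposal is correct and follows essentially the same route as the paper: the subgradient inequality $g(z^{k+1})-g(Ax^\star-b)\le\langle u^{k+1},z^{k+1}-Ax^\star+b\rangle$, a key algebraic identity driven by the $x$-update and the range condition $\ran(A^T)\subseteq\ran(M)$ (your displayed identity is exactly the paper's identity \eqref{eq:lemma-eq4} rewritten in the $(v,\tilde v)$ coordinates, and the quadratic form the paper uses is precisely the $(v,\tilde v)$ inner product), followed by Cauchy--Schwarz and Lemmas~\ref{lem:monotone} and \ref{lem:summable}. The only cosmetic difference is that you peel off the $u^\star$ contribution before applying Cauchy--Schwarz, whereas the paper keeps $u^{k+1}$ intact and applies the triangle inequality $D(x^{k+1}-x^\star,u^{k+1})\le D(x^{k+1}-x^\star,u^{k+1}-u^\star)+\|u^\star\|$ at the end; the two orderings yield the same bound.
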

\begin{proof}
Since $u^{k+1}$ is defined as a minimizer, we have
\[
u^k+\alpha A(2x^{k+1}-x^k)-\alpha b
\in
u^{k+1}+\alpha \partial g^*(u^{k+1}).
\]
This implies 
$z^{k+1}\in \partial g^*(u^{k+1})$
and 
$u^{k+1}\in \partial g(z^{k+1})$.
This gives us the subgradient inequality
\begin{equation}
g(z^{k+1})-g(z)\le \langle u^{k+1},z^{k+1}-z\rangle
\qquad\forall z\in \reals^m.
\label{eq:subgrad}
\end{equation}

First, note that
\begin{align}
&\alpha \langle B(x^{k+1}-x^k),B(x^{k+1}-x^\star)\rangle
+\alpha \langle A(x^{k+1}-x^k),A(x^{k+1}-x^\star)\rangle\nonumber\\
&\qquad\qquad+\langle u^k,A(x^{k+1}-x^\star)\rangle\nonumber\\
&\quad=
-\langle x^{k+1}-x^\star,
((A^TA+B^TB)(A^TA+B^TB)^+-I)A^Tu^k\rangle\nonumber\\
&\quad=0.\label{eq:lemma-eq4}
\end{align}
The first equality follows from
the $x^{k+1}$-update of \eqref{eq:main-iter2}.
The second equality follows from the fact that
$((A^TA+B^TB)(A^TA+B^TB)^+-I)v=0$ for any $v\in \mathrm{range}(A^T)+\mathrm{range}(B^T)$
and $A^Tu^k\in \mathrm{range}(A^T)\subseteq \mathrm{range}(A^T)+\mathrm{range}(B^T)$.

Using this, we get
\begin{align*}
g&(z^{k+1})-g(Ax^\star-b)\le \langle u^{k+1},z^{k+1}-Ax^\star+b\rangle\\
&=
-\frac{1}{\alpha}
\langle
u^{k+1}-u^k-\alpha A(x^{k+1}-x^k),
u^{k+1}-\alpha A(x^{k+1}-x^\star)
\rangle\\
&\qquad+\langle u^k,A(x^{k+1}-x^\star)\rangle+\alpha \langle A(x^{k+1}-x^\star),A(x^{k+1}-x^k)\rangle\\
&=
-\frac{1}{\alpha}
\langle
u^{k+1}-u^k-\alpha A(x^{k+1}-x^k),
u^{k+1}-\alpha A(x^{k+1}-x^\star)
\rangle\\
&\qquad-\frac{1}{\alpha}\langle \alpha B(x^{k+1}-x^k),\alpha B(x^{k+1}-x^\star)\rangle\\
&=-
\frac{1}{\alpha}
\begin{bmatrix}
x^{k+1}-x^k\\u^{k+1}-u^k
\end{bmatrix}^T
\begin{bmatrix}
\alpha M&-\alpha A^T\\
-\alpha A& I
\end{bmatrix}
\begin{bmatrix}
x^{k+1}-x^\star\\u^{k+1}
\end{bmatrix}\\
&\le \frac{1}{\alpha}
D(x^{k+1}-x^k,u^{k+1}-u^k)D(x^{k+1}-x^\star,u^{k+1})\\
&\le \frac{1}{\alpha}
D(x^{k+1}-x^k,u^{k+1}-u^k)\left(D(x^{k+1}-x^\star,u^{k+1}-u^\star)+\|u^\star\|\right)\\
&\le \frac{1}{\alpha\sqrt{k+1}}
D(x^{0}-x^\star,u^{0}-u^\star)\left(D(x^{0}-x^\star,u^{0}-u^\star)+\|u^\star\|\right).
\end{align*}
The first inequality is \eqref{eq:subgrad}.
The first equality follows from substituting the definition of $z^{k+1}$ and reorganizing the terms.
The second equality follows from \eqref{eq:lemma-eq4}.
The third equality follows from the definition of $B$.
The second inequality follows from 
the Cauchy--Schwartz inequality.
The third inequality follows from the triangle inequality
and the fact that $D(0,u^\star)=\|u^\star\|$.
The finally inequality follows from Lemmas~\ref{lem:monotone} and \ref{lem:summable}.
\end{proof}

\begin{lemma}
\label{lem:main-rate}
Further assume $g$ is $L$-Lipschitz.
Then 
\begin{align*}
&g(Ax^{k+1}-b)-g(Ax^\star-b)\\
&\le \frac{1}{\alpha\sqrt{k+1}}
D(x^{0}-x^\star,u^{0}-u^\star)\left(D(x^{0}-x^\star,u^{0}-u^\star)+\|u^\star\|+L\right).
\end{align*}
\end{lemma}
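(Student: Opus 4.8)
The plan is to bootstrap from Lemma~\ref{lem:z-rate}, which already controls $g(z^{k+1})-g(Ax^\star-b)$ for the auxiliary point $z^{k+1}=A(2x^{k+1}-x^k)-b-\tfrac{1}{\alpha}(u^{k+1}-u^k)$. Since the only difference between the current estimate and the target is that we want $g$ evaluated at $Ax^{k+1}-b$ rather than at $z^{k+1}$, the natural first move is to write $g(Ax^{k+1}-b)-g(Ax^\star-b)=\big(g(Ax^{k+1}-b)-g(z^{k+1})\big)+\big(g(z^{k+1})-g(Ax^\star-b)\big)$ and handle the two pieces separately, controlling the second by Lemma~\ref{lem:z-rate} and the first by Lipschitz continuity.

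For the first piece, $L$-Lipschitzness gives $g(Ax^{k+1}-b)-g(z^{k+1})\le L\,\|(Ax^{k+1}-b)-z^{k+1}\|$, so everything reduces to bounding this one norm. The key algebraic observation, which I would carry out next, is that the discrepancy telescopes neatly:
\[
(Ax^{k+1}-b)-z^{k+1}=-A(x^{k+1}-x^k)+\tfrac{1}{\alpha}(u^{k+1}-u^k)=\tfrac{1}{\alpha}\big((u^{k+1}-u^k)-\alpha A(x^{k+1}-x^k)\big).
\]
Recognizing that this is exactly (up to the factor $1/\alpha$) the first component measured by the seminorm $D$ is the crux of the argument. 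Indeed, from the expansion $D^2(x,u)=\|u-\alpha Ax\|^2+\|x\|_{(\alpha M-\alpha^2 A^TA)}^2$, and using assumption~\eqref{assump3} to guarantee $\alpha M-\alpha^2 A^TA\succeq 0$ so the second term is nonnegative, we obtain $\|(u^{k+1}-u^k)-\alpha A(x^{k+1}-x^k)\|\le D(x^{k+1}-x^k,u^{k+1}-u^k)$. Hence $\|(Ax^{k+1}-b)-z^{k+1}\|\le\tfrac{1}{\alpha}D(x^{k+1}-x^k,u^{k+1}-u^k)$.

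With this in hand, I would invoke Lemma~\ref{lem:summable} to replace $D(x^{k+1}-x^k,u^{k+1}-u^k)$ by $\tfrac{1}{\sqrt{k+1}}D(x^0-x^\star,u^0-u^\star)$, yielding $g(Ax^{k+1}-b)-g(z^{k+1})\le \tfrac{L}{\alpha\sqrt{k+1}}D(x^0-x^\star,u^0-u^\star)$. Adding this to the bound of Lemma~\ref{lem:z-rate} and factoring out the common prefactor $\tfrac{1}{\alpha\sqrt{k+1}}D(x^0-x^\star,u^0-u^\star)$ produces exactly the claimed inequality, with the extra $L$ appearing additively inside the parenthesis. I do not expect a serious obstacle here; the proof is essentially a perturbation argument, and the only genuinely substantive step is spotting that the gap $(Ax^{k+1}-b)-z^{k+1}$ coincides with the quantity already controlled by $D$, so that no new constant beyond $L$ enters.
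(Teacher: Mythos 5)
Your proposal is correct and follows essentially the same route as the paper's proof: split off $g(Ax^{k+1}-b)-g(z^{k+1})$, bound it by $L\|Ax^{k+1}-b-z^{k+1}\| = L\|A(x^{k+1}-x^k)-(1/\alpha)(u^{k+1}-u^k)\| \le \tfrac{L}{\alpha}D(x^{k+1}-x^k,u^{k+1}-u^k)$ using that the discrepancy is exactly the first component of the seminorm $D$, then apply Lemma~\ref{lem:summable} and add the bound of Lemma~\ref{lem:z-rate}. No gaps.
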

\begin{proof}
Using the assumption that $g$ is $L$-Lipschitz and Lemma~\ref{lem:monotone}, we have
\begin{align*}
g(Ax^{k+1}-b)-g(z^{k+1})
&\le
|g(Ax^{k+1}-b)-g(z^{k+1})|\\
&\le L\|Ax^{k+1}-b-z^{k+1}\|\\
&=L\|A(x^{k+1}-x^k)-(1/\alpha)(u^{k+1}-u^k)\|\\
&\le\frac{L}{\alpha}D(x^{k+1}-x^k,u^{k+1}-u^k)\\
&\le
\frac{L}{\alpha\sqrt{k+1}}D(x^{0}-x^\star,u^{0}-u^\star)
\end{align*}
Combining this with Lemma~\ref{lem:z-rate} gives us the stated result.
\end{proof}

\begin{proof}[Proof of Theorem~\ref{thm:main-rate}]
The rate of Theorem~\ref{thm:main-rate} is a simpler version of
the rate of Lemma~\ref{lem:main-rate}.
When $a,b\ge 0$ are scalars, we have $\sqrt{a^2+b^2}\le a+b$. With this, we have
\begin{align*}
&D(x^{0}-x^\star,u^{0}-u^\star)\left(D(x^{0}-x^\star,u^{0}-u^\star)+\|u^\star\|+L\right)\\
&\quad\le \left(D(x^{0}-x^\star,u^{0}-u^\star)+\|u^\star\|+L\right)^2\\
&\quad\le 
\bigg(
\|u^0-u^\star-\alpha A(x^0-x^\star)\|+\|u^\star\|+L+
\|x^0-x^\star\|_{(\alpha M-\alpha^2 A^TA)}
\bigg)^2.
\end{align*}
Combining this with Lemma~\ref{lem:main-rate} gives us the stated result.
\end{proof}

\section{Applications}
\label{s:appl}
In this section, we present applications that benefit from NCS.


\subsection{Computed tomography}
In X-ray CT, X-ray beams are illuminated along multiple lines from a rotating source to a rotating set of detectors.
When the beams pass through the object of interest, energy is absorbed, and the intensity of the beams reduces.
This experimental setup corresponds to measuring $b=Ex\in \reals^m$, 
where $E\in \reals^{m\times n}$ is the measurement matrix induced by the geometry of the detector array, and $x\in \reals^n$ is the vectorized image of interest.

When the signal-to-noise ratio is poor or when there are fewer measurements than the number of unknowns,
it is helpful to solve the least-squares problem with total variation regularization \cite{rudin1992nonlinear,choi2010compressed,bian2014investigation}
\[
\begin{array}{ll}
\underset{x\in \reals^n}{\mbox{minimize}}&
\frac{1}{2}\|Ex-b\|^2+\lambda \|Dx\|_1
\end{array}
\]
where the optimization variable $x\in \reals^n$ represents the image (2D or 3D) to recover,
$D$ is the (2D or 3D) finite difference matrix, and $\lambda>0$.
This is equivalent to 
\[
\begin{array}{ll}
\mbox{minimize}&
\frac{1}{2}\|y\|^2+(\lambda\alpha/\beta) \|z\|_1\\
\mbox{subject to}&
\begin{bmatrix}
E\\(\beta/\alpha)D
\end{bmatrix}
x-
\begin{bmatrix}
b\\0
\end{bmatrix}=
\begin{bmatrix}
y\\z
\end{bmatrix}
\end{array}
\]
for any $\alpha,\beta>0$.

NCS applied to this problem is
\begin{align*}
x^{k+1}&=x^k
-(1/\alpha)F^{-1}(\diag(h) F(\alpha E^Tu^k+\beta D^Tv^k))
\\
u^{k+1}&=
\frac{1}{1+\alpha}(u^k+\alpha E(2x^{k+1}-x^k)-\alpha b)\\
v^{k+1}&=\Pi_{[-\lambda\alpha/\beta,\lambda\alpha/\beta]}\left(v^k+\beta D(2x^{k+1}-x^k)\right),
\end{align*}
where $\alpha,\beta>0$.
Here, $F$ represents the DFT (to be evaluated with the FFT),
$\diag(h)=FM^+F^{-1}$ where $M$ is the circulant approximation used for NCS,
and $\Pi_{[-s,s]}$ is evaluated componentwise with
\[
\Pi_{[-s,s]}(x)=
\left\{
\begin{array}{ll}
-s&\text{for }x\le -s\\
x&\text{for } -s<x<s\\
s
&\text{for } s\le x.
\end{array}
\right.
\]


\subsubsection{Parallel beam geometry}
\label{ss:parbeam}
In CT with 2D parallel beam geometry, parallel X-rays are illuminated through the object and detectors measure their attenuation.
The X-ray source and detectors circle around the object.
Under this setup, $E=R\in \reals^{m\times n}$ is the discretization of the Radon transform \eqref{eq:radon-def}.
For simplicity, we consider the 2D $N\times N$ grid, with $N^2=n$. 
We choose $M=F^{-1}\diag(h)^+F$
based on \eqref{eq:f-diff} and \eqref{eq:radon-formula}.
We approximate $R^TR\approx F^{-1}\diag(h_R)F$, where 
we identify $h_R\in \reals^{N^2}$ with the $N\times N$ mask $H_R$ defined with
\begin{equation}
(H_R)_{(j+1),(k+1)}=
C_R
\left(
\min\{j,N-j\}^2+\min\{k,N-k\}^2\right)^{-1/2}
 \label{eq:circ_R}
\end{equation}
for $j,k=0,\dots,N-1$, except for $j=k=0$.
The constant $C_R$ accounts for the scaling factor
of the discrete Radon transform and backprojection.
We separately tune the ``DC component'' (direct current) $H_{1,1}$.
We approximate $D^TD\approx F^{-1}\diag(h_D)F$, where 
we identify $h_D\in \reals^{N^2}$ with the $N\times N$ mask $H_D$ defined with
\begin{equation}
(H_D)_{(j+1),(k+1)}=
4\left(\sin^2\left(
\frac{j\pi}{ N}\right)
+
 \sin^2\left(\frac{k\pi}{N}\right)\right)
 \label{eq:circ_D}
\end{equation}
for $j,k=0,\dots,N-1$.

\emph{Remark.}
In the continuous space, before discretization, we have
\[
(\mathcal{R}^*\mathcal{R})^{-1}f=
\mathcal{F}^{-1}\left(\sqrt{\zeta^2+\xi^2}\hat{f}(\zeta,\xi)\right).
\]
Since the $\sqrt{\zeta^2+\xi^2}$ factor vanishes at $(0,0)$, the DC component of the original image is lost.
In the discretized space, however, the DC component is not fully lost, and approximating the DC component of $R^TR$ with $0$, which corresponds to using \eqref{eq:circ_R} for $H_{1,1}$, leads to poor results.
In our experiments, we observed that tuning the value of $H_{1,1}$ separately resolved this issue.

\emph{Remark.}
The pseudoinverse of $D^TD$,
the discrete Laplacian operator with Neumann boundary conditions, can be efficiently computed
with the discrete cosine transform (DCT),
provided one uses a fast DCT algorithm
\cite{ahmed1974,strang1999,golub2012,neumann_bc_1999,zhao_hyperspectral2013}.
However, this fact does not help us, since 
we need the pseudoinverse of $R^TR+D^TD$,
and the DCT basis does not simultaneously diagonalize $R^TR$ and $D^TD$.

\subsubsection{Fan beam and cone beam geometry}
In CT with 2D fan beam geometry, X-rays emanate from a point and detectors on the other side receive beams within a certain angle (so the detected beams form a fan shape). The X-ray source and detectors circle around the object.
The measurement operator is a discretization of the divergent beam transform, which is related to the Radon transform through reparameterization \cite{netterer2001}.

In CT with 3D cone beam geometry, X-rays emanate from a point and detectors on the other side receive beams within a certain solid angle (so the detected beams form a cone shape).
The X-ray source and detectors traverse an orbit in the 3D space. The extent to which the linear measurement operator leads to a near-circulant linear system depends on this orbit \cite{katsevich2002}.

If $C$ is circulant with $C=F^{-1}\diag(h)F$, then $FCv=\diag(h) Fv$ and
\[
h_i=(FCv)_i/(Fv)_i,
\]
for $i=1,\dots,n$ for any $v\in \reals^n$.
Therefore, we approximate $E^TE\approx F^{-1}\diag(h_E)F$ by taking the average of 
\begin{equation}
(FE^TEv)_i/(Fv)_i
\label{eq:H-fan}
\end{equation}
where $i$ represents all 2D or 3D indices, for many $v\in \reals^n$.
For $D^TD$, we use the circulant approximation of \eqref{eq:circ_D} for the 2D fan beam geometry, and an analogous circulant approximation for the 3D cone beam geometry.





\subsection{Positron emission tomography}
Positron emission tomography (PET) images positron-emitting isotopes placed within an object \cite{jain1989fundamentals}. When a positron is emitted, it combines with a nearby electron to emit two gamma-ray photons in opposite directions with a uniformly random orientation. An array of detectors surround the object and detect the pair of gamma rays in coincidence.
The emissions follow a Poisson process with intensity proportional to the concentration of the positron-emitting isotopes, which, in turn, is proportional to the tissue's metabolic activity. The intensity is zero outside the object.
The problem of reconstructing images corrupted by Poisson noise also arises in other setups as well \cite{huang_recent_2019}.

The region is discretized into a grid of $n$ pixels.
Let $x_j$ denote the intensity within pixel $j$ for $j=1,\dots,n$.
The reconstruction problem estimates $x\in \mathbb{R}^n$
from the observed data $b\in \mathbb{N}^m$, coincidence pair counts,
where $m$ is the number of detector pairs with which the lines of flight can be detected.
Let $e_{ij}$ be the probability that a photon pair emitted from pixel $j$ is
detected by the $i$th detector pair. 
This quantity is determined by the geometry of the experimental setup.
Each $b_i$ is a measurement of independent Poisson random variables
with mean $\sum_{j=1}^n e_{ij}x_j$ for $i=1,\dots,m$.
The maximum likelihood estimation of $x$ is
\begin{equation}\label{eq:pet_mle}
\begin{array}{ll}
\underset{x\in \reals^n}{\mbox{minimize}}&
\sum^m_{j=1}\ell((Ex)_j;b_j)
\end{array}
\end{equation}
where $E=(e_{ij})\in\mathbb{R}^{m\times n}$,
\[
\ell(y;b)=
\left\{
\begin{array}{ll}
y -b\log y& b>0\\
y+\delta_{\reals_+}(y)&b=0,
\end{array}\right.
\]
and 
\[
\delta_{\mathbb{R}_+}(y)=
\left\{
\begin{array}{ll}
0&\text{for }y \ge 0\\
\infty&\text{otherwise.}
\end{array}\right.
\]
For reference, see \cite{shepp1982maximum,lange1984reconstruction,vardi1985statistical}.
Again, we use total variation regularization to improve the reconstruction.
This leads to the optimization problem
\[
\begin{array}{ll}
\underset{x\in \reals^n}{\mbox{minimize}}&
\sum^m_{j=1}\ell((Ex)_j;b_j)+\lambda \|Dx\|_1,
\end{array}
\]
where $\lambda>0$.

Similar to before, NCS applied to this problem is
\begin{align*}
x^{k+1}&= x^k-
(1/\alpha)F^{-1}(\diag(h) F(\alpha E^Tu^k+\beta D^Tv^k))\\
u^{k+1}_i&=S(u^k_i+\alpha (E(2x^{k+1}-x^k))_i;\alpha b_i)\quad \text{for } i=1,\dots,n\\
v^{k+1}&=\Pi_{[-\lambda\alpha/\beta,\lambda\alpha/\beta]}\left(v^k+\beta D(2x^{k+1}-x^k)\right),
\end{align*}
where $\alpha,\beta>0$ and $S$ is evaluated componentwise with
\[
S(u;c)
=1+\frac{u-1-\sqrt{(u-1)^2+4c}}{2}.
\]
By considering moment matching instead of maximum likelihood in the estimation of $x$, it can be shown that $E$ is very close to the Radon transform \cite{shepp1982maximum,vardi1985statistical}.
Therefore, 
$E^TE\approx R^TR$, and we use the circulant approximation described in Section~\ref{ss:parbeam}.
For $D^TD$, we use the circulant approximation of \eqref{eq:circ_D}.

\emph{Remark.}
Although $\ell$ is a differentiable convex function, its domain is not closed and its gradient is not Lipschitz continuous. This makes gradient and projected gradient methods difficult to apply.
NCS does not run into this difficulty since it relies on the proximal operator of $\ell$.

\emph{Remark.}
Sometimes, one might want to incorporate a positivity constraint $x\ge 0$.
This can be done by using the linear system
\[
\begin{bmatrix}
E\\D\\I
\end{bmatrix}x-
\begin{bmatrix}
b\\0\\0
\end{bmatrix}=
\begin{bmatrix}
y\\z\\s
\end{bmatrix}
\]
and the objective function
\[
\sum^m_{j=1}\ell(y_j;b_j)+\lambda \|z\|_1+\delta_{\mathbb{R}_+}(s).
\]



%

\begin{figure}
    \centering
    \begin{subfigure}[b]{0.45\textwidth}
        \includegraphics[width=\textwidth]{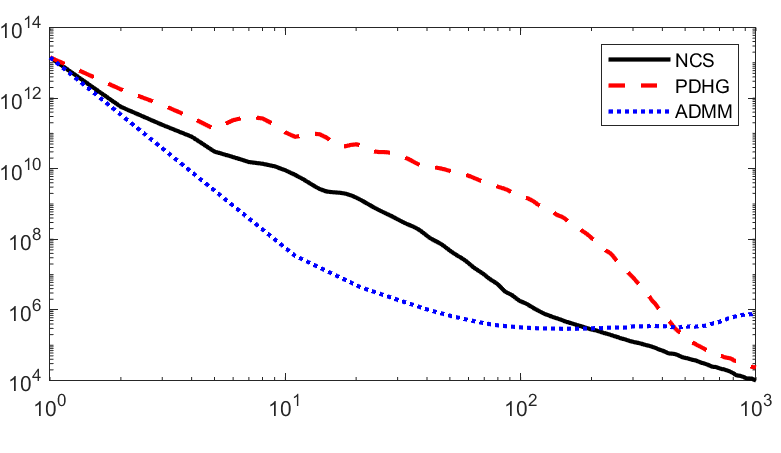}
        \caption{Parallel beam CT}
    \end{subfigure}
    ~
    \begin{subfigure}[b]{0.45\textwidth}
        \includegraphics[width=\textwidth]{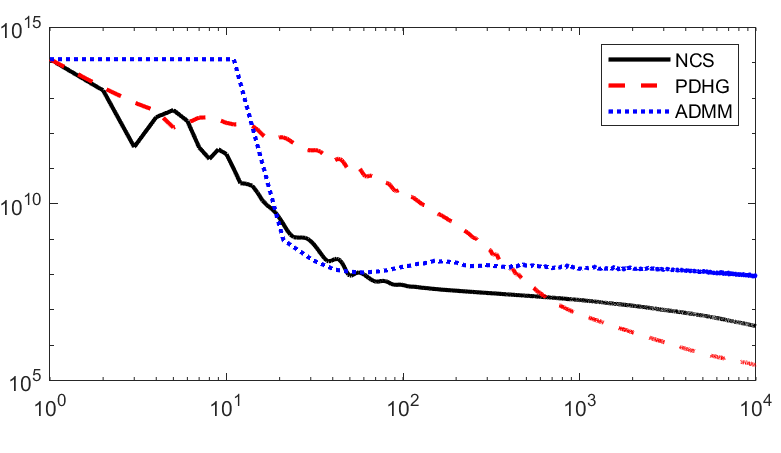}
        \caption{Fan beam CT}
\end{subfigure}
\\
    \begin{subfigure}[b]{0.45\textwidth}
        \includegraphics[width=\textwidth]{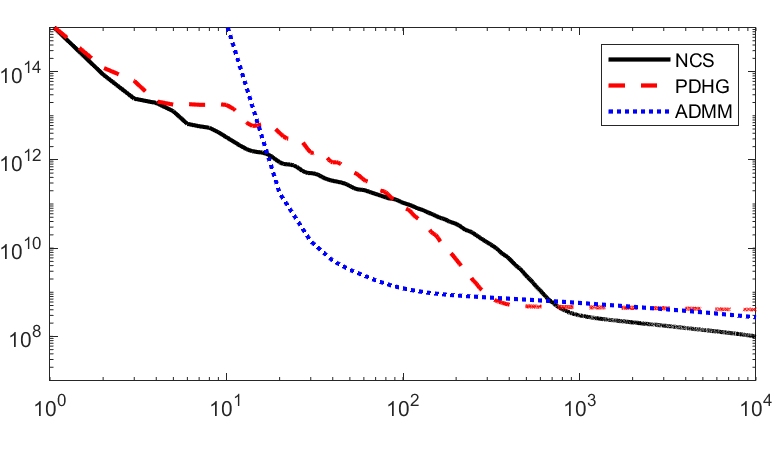}
        \caption{Cone beam (3D) CT}
    \end{subfigure}
    ~
    \begin{subfigure}[b]{0.45\textwidth}
        \includegraphics[width=\textwidth]{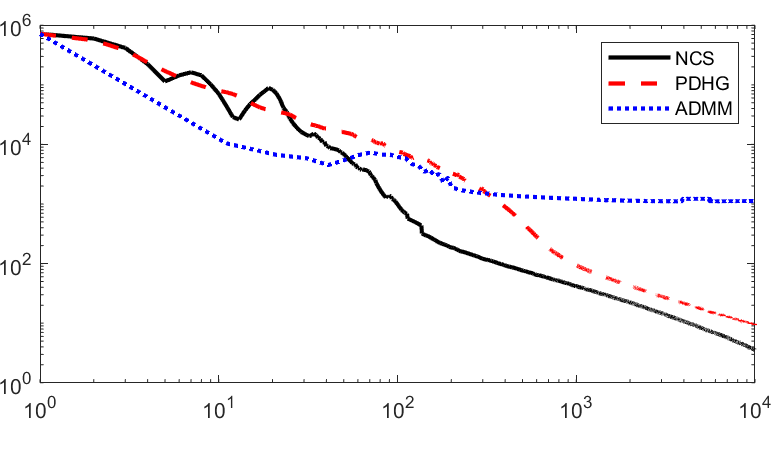}
        \caption{PET}
            \end{subfigure}
        \caption{Objective value suboptimality vs.\ iteration count. For ADMM, we count the number of inner-loop CG iterations.}
        \label{fig:convergence_plot}
\end{figure}

\begin{figure}
    \begin{subfigure}[b]{0.3\textwidth}
        \includegraphics[width=\textwidth]{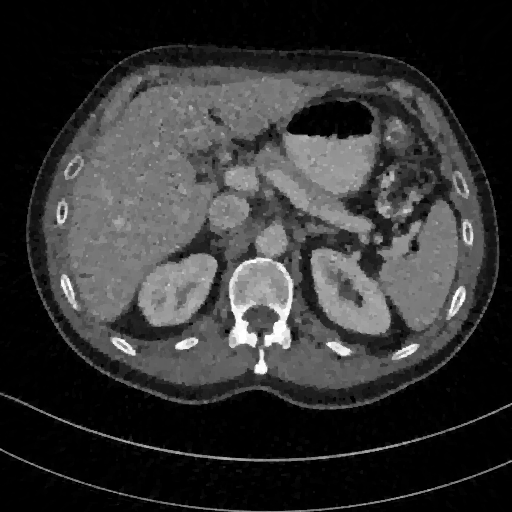}
        \caption{Parallel beam NCS}
            \end{subfigure}
    ~
    \centering
    \begin{subfigure}[b]{0.3\textwidth}
        \includegraphics[width=\textwidth]{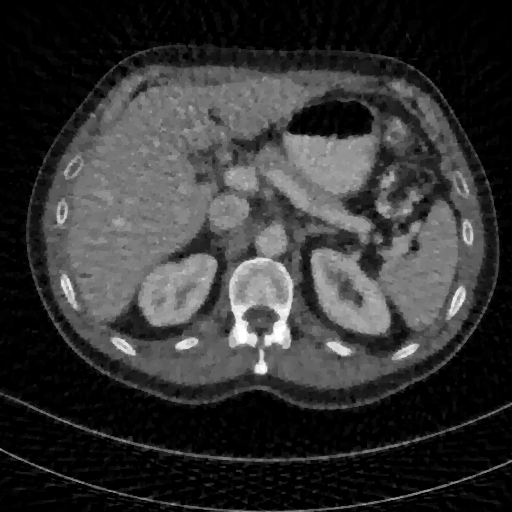}
        \caption{Parallel beam PDHG}
    \end{subfigure}
    ~
    \begin{subfigure}[b]{0.3\textwidth}
        \includegraphics[width=\textwidth]{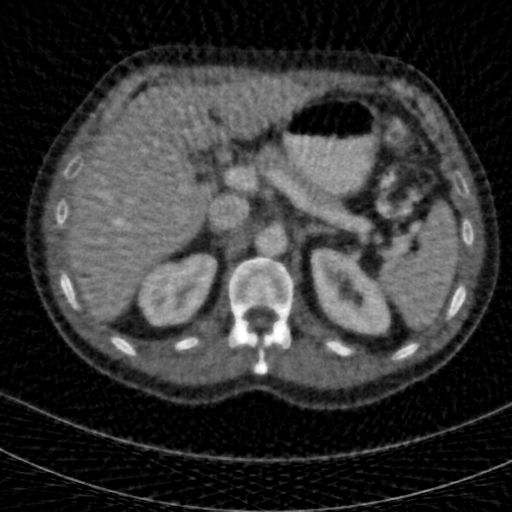}
        \caption{Parallel beam ADMM}
    \end{subfigure}\\
    \begin{subfigure}[b]{0.3\textwidth}
        \includegraphics[width=\textwidth]{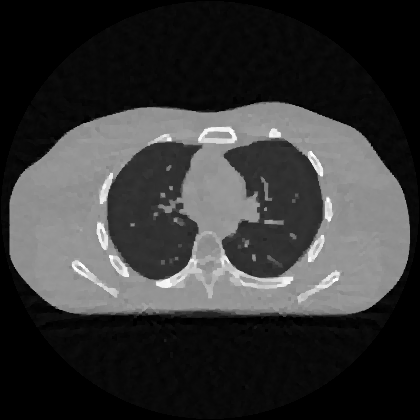}
        \caption{Fan beam NCS}
            \end{subfigure}
    ~
    \begin{subfigure}[b]{0.3\textwidth}
        \includegraphics[width=\textwidth]{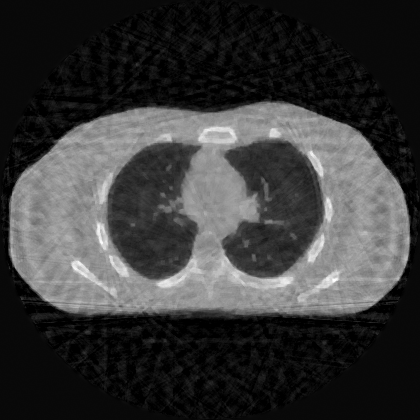}
        \caption{Fan beam PDHG}
    \end{subfigure}
    ~
    \begin{subfigure}[b]{0.3\textwidth}
        \includegraphics[width=\textwidth]{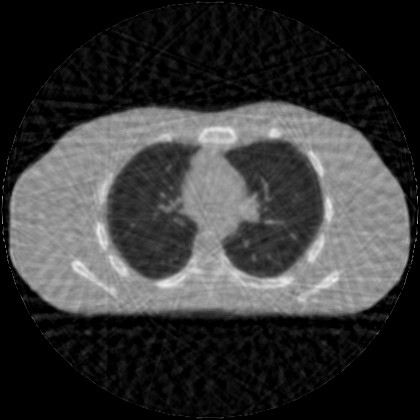}
        \caption{Fan beam ADMM}
    \end{subfigure}\\
    \begin{subfigure}[b]{0.3\textwidth}
        \includegraphics[width=\textwidth]{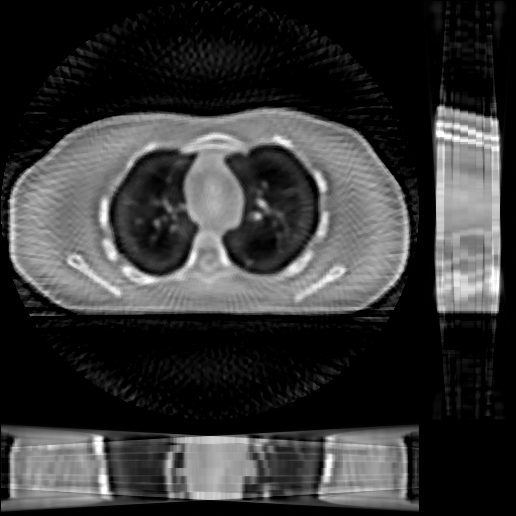}
        \caption{Cone beam (3D) NCS}
            \end{subfigure}
    ~
    \begin{subfigure}[b]{0.3\textwidth}
        \includegraphics[width=\textwidth]{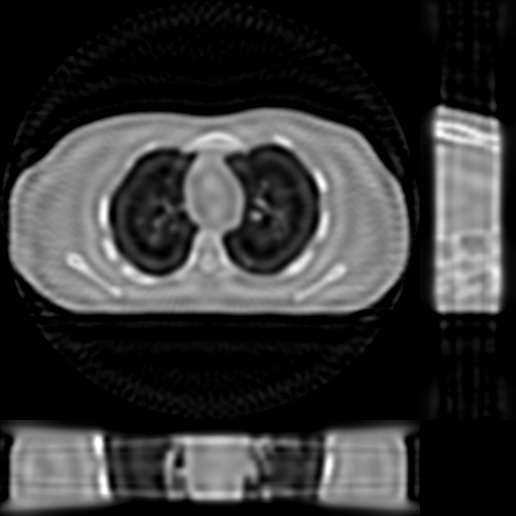}
        \caption{Cone beam (3D) PDHG}
    \end{subfigure}
    ~
    \begin{subfigure}[b]{0.3\textwidth}
        \includegraphics[width=\textwidth]{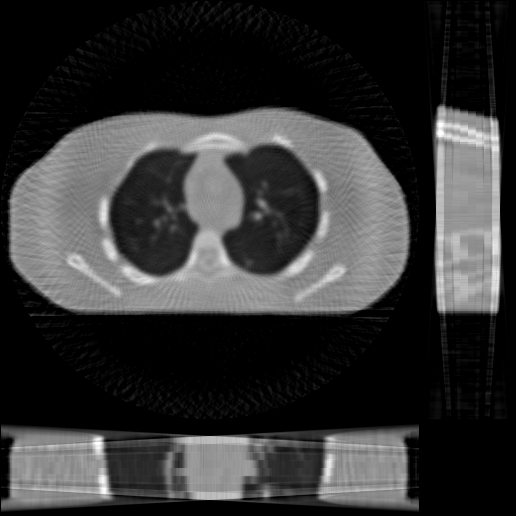}
        \caption{Cone beam (3D) ADMM}
    \end{subfigure}\\
    \begin{subfigure}[b]{0.3\textwidth}
        \includegraphics[width=\textwidth]{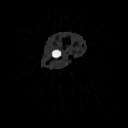}
        \caption{PET NCS}
            \end{subfigure}
    ~
    \begin{subfigure}[b]{0.3\textwidth}
        \includegraphics[width=\textwidth]{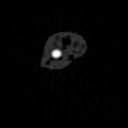}
        \caption{PET PDHG}
    \end{subfigure}
    ~
    \begin{subfigure}[b]{0.3\textwidth}
        \includegraphics[width=\textwidth]{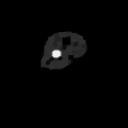}
        \caption{PET ADMM}
    \end{subfigure}
    \caption{
Parallel beam CT: $100$ iterations, runtime 2.38s, 2.21s, and 2.07s.
Fan beam:  $300$ iterations, runtime 4.31s, 4.41s, and 4.96s.
Cone beam:  $30$ iterations, runtime 54.29s, 50.33s, and 49.73s.
PET: $1000$ iterations, runtime 6.62s, 6.29s, and 5.47s.
For ADMM, we count the number of inner-loop CG iterations.}
\label{fig:CT}
\end{figure}

\section{Experiments}
\label{s:exp}
In this section, we present numerical experiments of NCS on a CUDA GPU.
We compare the performance of NCS against PDHG and Ramani and Fessler's ADMM which uses conjugate gradient (CG) to solve the linear systems \cite{Ramani2012}.
Code is available at \url{https://github.com/kose-y/near-circulant-splitting}

We observe that the matrix-vector multiplication with $A$ and $A^T$ dominates the computational cost; the cost of the FFT in NCS and other operations is much smaller in comparison.
Consequently, the costs of one iteration of NCS, one iteration of PDHG, and one CG iteration for ADMM are comparable.
The convergence plots of Figure~\ref{fig:convergence_plot} show that NCS provides a speedup over PDHG and ADMM.
Figure~\ref{fig:CT} shows reconstructed images.
Table~\ref{table:gpu-acceleration} shows the speedup of the GPU implementation over the CPU implementation.
Time measurements were taken on a system with an Intel Core i7-990X CPU running at 3.47GHz and a Titan Xp GPU.



For parallel beam CT, we use real patient data provided by the Mayo Clinic (referenced in the acknowledgements).
For fan beam and cone beam CT, we use the synthetic extended cardiac-torso (XCAT) data \cite{segarsa2008} and follow the setup of \cite{zheng2018}.
For PET, we use the data of \cite{Lim_2018} and follow its setup.

For all experiments, we roughly tuned the parameters of NCS and the other method at a resolution of  powers of $3$; i.e., we tested parameters $1\times 10^p$ and $3\times 10^p$ for $p=0,\pm1,\pm2,\dots$.
To get good performance, tuning the parameters well was important, but it was not a very difficult or sensitive process.

Parallel beam CT reconstructs a $512\times 512$ image with $\lambda = 10^{0}$.
The discrete Radon transform computed $60$ equiangular projections for a total of  $ 729\times 60=43740$ measurements.
The matrix-vector multiplication with respect to $R$ and $R^T$ were performed with the MATLAB GPU-accelerated \verb|radon| and \verb|iradon| functions.
For NCS, we used $\alpha=10^{-2}$, $\beta = 10^{-2}$, $\gamma=10^0$, and $H_{1,1}=10^{-1}$.
For PDHG, we used $\alpha=10^{-2}$, $\beta = 3\times 10^{-2}$, and $\gamma=10^1$.
For ADMM, we used $\alpha=10^{0}$, $\beta = 3\times 10^{-3}$, and $10$ CG inner iterations per outer loop.

Fan beam CT reconstructs a $420\times 420$ image with a sinogram of $11,100$ measurements and $\lambda = 10^1$.
The matrix-vector multiplications with respect to $E$ and $E^T$ were performed with the MATLAB GPU-accelerated sparse matrices.
For NCS, we used $\alpha=3\times10^{-3}$, $\beta = 10^{-2}$, and $\gamma=10^0$.
For PDHG, we used $\alpha=10^{-3}$, $\beta = 3\times 10^{-3}$, and $\gamma=10^2$.
For ADMM, we used $\alpha=10^{-4}$, $\beta = 3\times 10^{-2}$, and $10$ CG inner iterations per outer loop.

Cone beam CT reconstructs a 3D $420\times420\times 96$ image with a sinogram of $110,208$ measurements and $\lambda = 10^0$.
The matrix-vector multiplications with respect to $E$ and $E^T$ were performed with Fessler's MIRT toolbox \cite{mirt}.
For NCS, we used $\alpha=10^{-3}$, $\beta =10^{-1}$, and $\gamma=3\times10^{-2}$.
For PDHG, we used $\alpha=10^{-3}$, $\beta =10^{-1}$, and $\gamma=10^0$.
For ADMM, we used $\alpha=10^0$, $\beta = 10^{-1}$, and $10$ CG inner iterations per outer loop.

PET reconstructs a $128\times 128$ image with the geometry of $128$ equally spaced detectors on the unit circle and $\lambda = 10^{-3}$.
The matrix-vector multiplications with respect to $E$ and $E^T$ were performed with MATLAB GPU-accelerated sparse matrices.
For NCS, we used $\alpha= 10^{-3}$, $\beta = 10^{-3}$, $\gamma=10^{-4}$, and $H_{1,1}=10^{-2}$.
For PDHG, we used $\alpha=10^{-2}$, $\beta = 10^{-2}$, and $\gamma=3\times 10^{-2}$.
For ADMM, we used $\alpha=3\times 10^{-4}$, $\beta = 3\times 10^{-1}$, and $10$ CG inner iterations per outer loop.

For parallel beam CT and PET, where we use circulant approximations based on the analytic formula \eqref{eq:circ_R}, NCS consistently outperforms PDHG.
For fan beam and cone beam CT, where we use circulant approximations based on the more ad hoc formula \eqref{eq:H-fan}, NCS outperforms PDHG in specific regimes (in early iterations or later iterations).
ADMM with CG performs well in early iterations but worse in later iterations.

\begin{table}
\begin{center}
\begin{tabular}{ |c|c | c| c|}
  \hline	
 &
 \begin{tabular}{@{}c@{}}Intel Core i7-990X\\@ 3.47GHz\end{tabular}
  & TITAN Xp &Speedup\\
   \hline	
  Par Beam CT ($128\times128)$& $2.41 \mathrm{s}$ & $3.86\mathrm{s}$ & $0.62$x\\
    Par Beam CT ($256\times 256)$& $8.51\mathrm{s}$ & $4.49\mathrm{s}$ & $1.90$x\\
    Par Beam CT ($512\times 512)$& $38.92\mathrm{s}$ & $5.32\mathrm{s}$ & $7.32$x\\
    Par Beam CT ($1024\times1024)$& $198.53\mathrm{s}$ & $14.99\mathrm{s}$ & $13.2$x\\
   \hline	
  PET $(128\times 128)$&   $27.57\mathrm{s}$ &$4.07\mathrm{s}$ & $6.77$x\\
  PET $(256\times 256)$&   $109.68\mathrm{s}$ &$5.09\mathrm{s}$ & $21.5$x\\
  PET ($512\times 512)$&   $452.8\mathrm{s}$ &$9.39\mathrm{s}$ & $48.2$x\\
    \hline	
\end{tabular}
\end{center}
\caption{Runtimes of $1000$ NCS iterations with CPU and CUDA GPU implementations. 
The GPU implementation provides a significant speedup.
}
\label{table:gpu-acceleration}
\end{table}

\section{Conclusion}
In this paper, we presented NCS, a splitting method that leverages the near-circulant structure present in certain imaging applications.
For the problems we consider, ADMM/DRS requires an exact pseudoinverse, which is a significant computational burden. This requirement is relaxed for NCS; we only need to compute the pseudoinverse of an approximate linear system.
We theoretically analyzed NCS, and the result informs us of the effect and the advantage of using an approximate pseudoinverse. We presented medical imaging applications that exhibit near-circulant structures, which provide computationally efficient approximate pseudoinverses. We apply NCS to these problems, and, through experiments, empirically validate the theory and demonstrate that the algorithm can effectively utilize the parallel computing capability of a CUDA GPU.
The code used for our experiments is available at
\url{https://github.com/kose-y/near-circulant-splitting}

%

\section{Appendix}
\label{s:appendix}
In this section, we quickly show how to obtain \eqref{eq:DRS/ADMM} from ADMM and DRS.
The two derivations are, in a sense, equivalent because
ADMM and DRS are, in a sense, equivalent methods \cite{gabay1983}.

\subsection{Derivation from ADMM}
Apply ADMM to \eqref{eq:RCP}
\begin{align*}
z^{k+1}&= \argmin_{z\in\mathbb{R}^m}\left\{
g(z)+\frac{\alpha}{2}\|z-Ax^k+b+\lambda^k\|^2
\right\}\\
x^{k+1}&=\argmin_{x\in \mathbb{R}^n}\left\{
\frac{\alpha}{2}\|z^{k+1}-Ax+b+\lambda^k\|^2
\right\}\\
\lambda^{k+1}&=\lambda^k+z^{k+1}-Ax^{k+1}+b.
\end{align*}
Rewrite this as
\begin{align*}
z^{k+1}&= 
\prox_{(1/\alpha)g}(Ax^k-b-\lambda^k)\\
x^{k+1}&=
(A^TA)^+ A^T(z^{k+1}+b+\lambda^k)\\
\lambda^{k+1}&=\lambda^k+z^{k+1}-Ax^{k+1}+b.
\end{align*}
Rewrite this using \eqref{eq:Moreau} as 
\begin{align*}
v^{k+1}&= \prox_{\alpha g^*}(\alpha (Ax^k-b-\lambda^k))\\
z^{k+1}&= 
Ax^k-b-\lambda^k-(1/\alpha)v^{k+1}\\
x^{k+1}&=
(A^TA)^+ A^T(z^{k+1}+b+\lambda^k)\\
\lambda^{k+1}&=A(x^k-x^{k+1})-(1/\alpha)v^{k+1}.
\end{align*}
Finally, we get
\begin{align*}
v^{k+1}&= \prox_{\alpha g^*}(v^k+\alpha (A(2x^k-x^{k-1})-b))\\
x^{k+1}&=
x^k-(1/\alpha)
(A^TA)^+ A^Tv^{k+1}.
\end{align*}

\subsection{Derivation from DRS}
Write the dual problem \eqref{eq:dual} as
\[
\begin{array}{ll}
\underset{u\in \mathbb{R}^m}{\mbox{minimize}} &g^*(u)+u^Tb
+\delta_{
\{
u\in \mathbb{R}^m\,|\,A^Tu=0
\}}(u).
\end{array}
\]
Apply DRS to this dual formulation to get
\begin{align*}
u^{k+1/2}&=
(I-A(A^TA)^+ A^T)z^k\\
u^{k+1}&=\prox_{\alpha g^*}(2u^{k+1/2}-z^k-\alpha b)\\
z^{k+1}&=z^k+u^{k+1}-u^{k+1/2}.
\end{align*}
Define $Ax^{k+1}$ with
\[
z^k=u^{k+1/2}-\alpha Ax^{k+1}.
\]
Then 
\begin{align*}
Ax^{k+1}&=-(1/\alpha)
A(A^TA)^+ A^Tz^k\\
u^{k+1}&=\prox_{\alpha g^*}(
z^k-2A(A^TA)^+ A^Tz^k
-\alpha b)\\
z^{k+1}&=u^{k+1}-\alpha Ax^{k+1}
\end{align*}
and
\begin{align*}
Ax^{k+1}&=A\left(x^k-(1/\alpha)
(A^TA)^+ A^Tu^k\right)\\
u^{k+1}&=\prox_{\alpha g^*}(
u^k+\alpha (A(2x^{k+1}-x^k)-b)).
\end{align*}
The algorithm depends on $x^k$ only through the product $Ax^k$. Therefore, we can write
\begin{align*}
x^{k+1}&=x^k-(1/\alpha)
(A^TA)^+ A^Tu^k\\
u^{k+1}&=\prox_{\alpha g^*}(
u^k+\alpha (A(2x^{k+1}-x^k)-b)).
\end{align*}

\section*{Acknowledgments}
We would like to thank Wotao Yin for his many helpful comments.
We gratefully acknowledge the support of NVIDIA Corporation with the donation of the Titan Xp GPU used for this research.
We thank Dr.\ Cynthia McCollough, the Mayo Clinic, the American Association of Physicists in Medicine, and grants EB017095 and EB017185 from the National Institute of Biomedical Imaging and Bioengineering for providing the real patient data used in the experiments.

\bibliographystyle{siamplain}
\bibliography{CT}
\end{document}